\setlist{listparindent=0pt,parsep=3pt}
\newcommand{\TitleWithUrl}[1]{\IfEmptyBibField{doi}%
  {\IfEmptyBibField{url}{\textit{#1}}%
    {\IfEmptyBibField{eprint}{\href {\BibField{url}}{\textit{#1}}}{\textit{#1}}}%
    }%
  {\href {https://doi.org/\BibField{doi}}{\textit{#1}}}}
\renewcommand{\eprint}[1]{\IfEmptyBibField{url}{\url{#1}}%
  {\href {\BibField{url}}{#1}}}
\newtheorem{theo}{Theorem}[section]
\newtheorem{fact}[theo]{Fact}
\newtheorem{prop}[theo]{Proposition}
\newtheorem{lemm}[theo]{Lemma}
\newtheorem{coro}[theo]{Corollary}
\theoremstyle{definition}
\newtheorem{defi}[theo]{Definition}
\newtheorem{exam}[theo]{Example}
\theoremstyle{remark}
\newtheorem{rema}[theo]{Remark}
\numberwithin{equation}{section}
\newcommand{\cratio}{\operatorname{cr}}
\renewcommand{\Re}{\operatorname{Re}}
\newcommand{\SO}{\operatorname{SO(3)}}
\newcommand{\lieo}{\operatorname{o(3)}}
\newcommand{\resp}[1]{\textup{(}resp. #1\textup{)}}
\newsavebox\mybox
\title{Discrete minimal nets with symmetries}
\author{Joseph Cho}
\address[Joseph Cho]{Institute of Discrete Mathematics and Geometry, TU Wien, Wiedner Hauptstrasse 8-10/104, 1040 Wien, Austria}
\email{jcho@geometrie.tuwien.ac.at}
\author{Wayne Rossman}
\address[Wayne Rossman]{Department of Mathematics, Graduate School of Science, Kobe University, 1-1 Rokkodai-cho, Nada-ku, Kobe 657-8501, Japan}
\email{wayne@math.kobe-u.ac.jp}
\author{Seong-Deog Yang}
\address[Seong-Deog Yang]{Department of Mathematics, Korea University, 145 Anam-ro, Seoul 02841, Korea}
\email{sdyang@korea.ac.kr}
\date{}
\subjclass[2020]{Primary 53A70; Secondary 53A10}
\begin{document}

\begin{abstract}
	In this paper, we extend the notion of Schwarz reflection principle for smooth minimal surfaces to the discrete analogues for minimal surfaces, and use it to create global examples of discrete minimal nets with high degree of symmetry.
\end{abstract}

\maketitle

\section{Introduction}

In the case of smooth minimal surfaces in Euclidean $3$-space $\mathbb{R}^3$, the Schwarz reflection principle has been used to good effect to extend minimal surfaces and study their global behavior.
The Schwarz reflection principle for minimal surfaces comes in two forms.
One states that if the minimal surface lies to one side of a plane and has a curvature-line boundary lying in that plane and meeting it perpendicularly, then the surface extends smoothly by reflection to the other side of the plane.
The other states that if the minimal surface contains a boundary line segment, then it can be smoothly extended across the line by including the 180 degree rotation of the surface about that line.
When one of these two situations holds on a minimal surface, the other one holds on the conjugate minimal surface.

By the nature of the Schwarz reflection principle, we expect that the surfaces constructed will have relatively high degrees of symmetry.
Such symmetry has been seen in numerous works, see, for example, \cite{costa_example_1984, hoffman_complete_1985, hoffman_embedded_1990, karcher_embedded_1988, karcher_triply_1989, karcher_construction_1996, rossman_minimal_1995, rossman_irreducible_1997, schoen_infinite_1970}.

Such symmetry has also been exploited in the discrete case as well: for discrete $S$-isothermic minimal nets, see, for example, \cite{bucking_approximation_2007, bucking_minimal_2008, bobenko_discrete_2017}; for discrete isothermic constant mean curvature nets, see, for example, \cite{hoffmann_discrete_2000}.

In this paper, we investigate how a similar reflection principle will work in the case of discrete isothermic minimal nets and discrete asymptotic minimal nets.
The benefit of this is that it provides us a further tool for extending discrete minimal surfaces described locally (which has been well investigated) to surfaces considered at a more global level (which has not received as much attention yet).
For example, we will construct the central part of a discrete minimal trinoid, which can then be regarded as existing on a global level, since it is not a simply connected surface, as it is topologically equivalent to the sphere minus three disks.
Like in the smooth case, we expect to see relatively high degrees of symmetry in the surfaces we construct in this way.

Our primary results are Proposition \ref{prop:isothermicReflection} and Theorem \ref{theo:asymptoticReflection}, which are the two forms of Schwarz reflection in the discrete case.
In Corollary \ref{coro:linetoplane}, we also show that the two forms of Schwarz reflection are related by conjugate discrete minimal nets, as defined in \cite{hoffmann_discrete_2017}.
Finally, we use these results to produce examples in Section \ref{sect:examples}.

\section{Preliminaries}
Let our domain be a $\mathbb{Z}^2$ lattice with $(m,n) \in \mathbb{Z}^2$, and let $(ijkl)$ denote the vertices of an elementary quadrilateral $((m,n), (m+1, n), (m+1,n+1), (m,n+1))$.
For simplicity, we have chosen our domain to be $\mathbb{Z}^2$; however, the theory will hold true for subdomains of $\mathbb{Z}^2$.
If $F$ is a discrete net $F : \mathbb{Z}^2 \to \mathbb{R}^3$, then we write $F(m,n) = F_{m,n} = F_i$ over any elementary quadrilateral, and let
	\[
		\dif F_{ij} := F_j - F_i.
	\]
A discrete net $F$ is called a \emph{circular net} if $F_i$, $F_j$, $F_k$, and $F_l$ are concircular, representing a discrete notion of curvature line coordinates \cite{nutbourne_differential_1988}.

\subsection{Discrete isothermic nets}
First we recall from \cite[Definition 4]{bobenko_discrete_1996-1} how the cross ratio of four points in $\mathbb{R}^3$ are defined.
\begin{defi}
	Let $x_1, \ldots, x_4 \in \mathbb{R}^3$, and let $\mathbb{R}^3$ be identified with the set of quaternions $\mathbb{H}$ under the usual identification $\mathbb{R}^3 \ni x_i \sim X_i \in \mathbb{H}$.
	The pair of eigenvalues $\{q, \bar{q}\}$ of the quaternion
	\[
		(X_1 - X_2) (X_2 - X_3)^{-1} (X_3 - X_4) (X_4 - X_1)^{-1}
	\]
	is called the \emph{cross ratio of $x_1, \ldots, x_4$}. 
	In the case where $x_1, \ldots, x_4$ are concircular, $q = \bar{q} \in \mathbb{R}$, and we write
	\[
		\cratio(x_1, x_2, x_3, x_4) = q.
	\]
\end{defi}
\begin{rema}
It was further proved in \cite[Lemma 1]{bobenko_discrete_1996-1} that this cross ratio is invariant under Möbius transformations.
\end{rema}
Using this definition of cross ratios, discrete isothermic nets are defined as follows in \cite[Definition 6]{bobenko_discrete_1996-1}:
\begin{defi}
A circular net $F$ is called a \emph{discrete isothermic net} if on every elementary quadrilateral $(ijkl)$,
	\[
		\cratio(F_i, F_j, F_k, F_l) = \frac{a_{ij}}{a_{il}} \in \mathbb{R}_{<0},
	\]
where $a_{ij}$ (resp.\ $a_{il}$) are edge-labeling scalar functions defined on unoriented edges; that is,
	\begin{equation}\label{eqn:edgelabeling}
		a_{ij} = a_{l k} \quad\text{and}\quad a_{il} = a_{jk}
	\end{equation}
on every elementary quadrilateral $(ijkl)$. We call $a_{ij}$ and $a_{il}$ the \emph{cross ratio factorizing functions}.
\end{defi}

It is shown in \cite[Theorem 6]{bobenko_discrete_1996-1} that, for any discrete net $F$, the discrete isothermicity of $F$ is equivalent to the existence of another discrete net $F^*$ such that
	\[
		\dif F^*_{ij} = \frac{a_{ij}}{\| \dif F_{ij}\|^2} \dif F_{ij}, \quad \dif F^*_{il} = \frac{a_{il}}{\| \dif F_{il}\|^2} \dif F_{il}.
	\]
If such an $F^*$ exists, $F^*$ is called a \emph{Christoffel transformation} of $F$, and $(F^*)^* = F$ up to scaling and translation in $\mathbb{R}^3$.

\subsection{Discrete Gaussian and mean curvatures}
For any two parallel circular nets $F$ and $G$, i.e.\ $F$ and $G$ are both circular nets with parallel corresponding edges, the \text{mixed area} of $F$ and $G$ is defined on every elementary quadrilateral as
	\[
		A(F,G)_{ijkl} := \frac{1}{4}(\delta F_{ik} \wedge \delta G_{jl} + \delta G_{ik} \wedge \delta F_{jl})
	\]
where $\delta F_{ik} := F_k - F_i$ and the exterior algebra $\wedge^2 \mathbb{R}^3 (\ni u \wedge v)$ is identified with the Lie algebra $\lieo$, i.e.\ for any $u, v, w \in \mathbb{R}^3$,
	\[
		(u \wedge v)\, w = (u \cdot w) v - (v \cdot w) u
	\]
for the usual inner product of $x, y \in \mathbb{R}^3$ expressed as $x \cdot y$.
Note that $A(F)_{ijkl} := A(F,F)_{ijkl}$ gives the area of the quadrilateral spanned by the image of $F$ over an elementary quadrilateral $(ijkl)$.

It is known through \cite{konopelchenko_three-dimensional_1998} that any circular net $F$ has a parallel circular net $N : \mathbb{Z}^2 \to S^2 \subset \mathbb{R}^3$ taking values in the unit sphere.
Such an $N$ is called a \emph{discrete Gauss map} of $F$.

\begin{rema}\label{rema:normalBundle}
If a discrete line bundle $L : \mathbb{Z}^2 \to \{\text{lines in $\mathbb{R}^3$}\}$ is the normal bundle of $F$, i.e.\ $F_i, F_i + N_i \in L_i$, then $L$ constitutes a discrete line congruence in the sense of \cite[Definition 2.1]{doliwa_transformations_2000}, as any two neighboring lines intersect.
One can see that after a choice of one normal direction at one vertex of $F$ (an initial condition), the line congruence condition and the parallel mesh condition uniquely determine the normal bundle $L$ over all vertices in the domain, since any two neighboring normal lines must intersect at equal distance from the vertices on the surface.
\end{rema}

Furthermore, it is not difficult to see that the parallel net $F^t$ defined as $F^t := F + t N$ for some constant $t$ is also a circular net parallel to $F$.
This allows us to consider the mixed area of $F$ and $F^t$, and recover the discrete version of the Steiner's formula based on mixed areas (see \cite{schief_unification_2003, pottmann_geometry_2007}):
	\begin{align*}
		A(F^t)_{ijkl} &= A(F)_{ijkl} + 2t A(F, N)_{ijkl} + t^2 A(N)_{ijkl} \\
			&= (1 - 2tH_{ijkl} + t^2 K_{ijkl}) A(F)_{ijkl}
	\end{align*}
where $H_{ijkl}$ and $K_{ijkl}$ are defined on each elementary quadrilateral as:
\begin{defi}\label{def:curvatures}
	We call
	\[
		H_{ijkl} = -\frac{A(F,N)_{ijkl}}{A(F)_{ijkl}}, \quad K_{ijkl} = \frac{A(N)_{ijkl}}{A(F)_{ijkl}}
	\]
	the mean and Gaussian curvatures of a circular net $F$ with Gauss map $N$.
\end{defi}

With the notion of mean curvature on any elementary quadrilateral $(ijkl)$ available, discrete isothermic minimal nets and discrete isothermic constant mean curvature (cmc) nets can be defined as:
\begin{defi}\label{def:minimality}
	A circular net $F$ is called a discrete isothermic minimal (resp. cmc) net if $H \equiv 0$ (resp.\ $H \equiv c \neq 0$ for some non-zero constant $c$) on every elementary quadrilateral.
\end{defi}

\subsection{Planar reflection principle for discrete isothermic minimal and cmc nets}
Since circular nets are a discrete analogue of curvature line coordinates, the following notion is natural.
\begin{defi}
	Let $F : \mathbb{Z}^2 \to \mathbb{R}^3$ be a circular net. A discrete space curve $F_{m, n_0}$ (resp. $F_{m_0, n}$) depending on $m$ (resp. $n$) for each $n_0 \in \mathbb{Z}$ (resp. $m_0 \in \mathbb{Z}$) is called a \emph{discrete curvature line}.
\end{defi}
Without loss of generality, let $n_0 \in \mathbb{Z}$, and let $F$ be a discrete isothermic minimal or cmc net, defined on the domain $D := \{(m, n) \in \mathbb{Z}^2 : n \leq n_0\}$ with corresponding Gauss map $N$. Suppose that the discrete curvature line $F_{m, n_0}$ is contained in a plane $\mathcal{P}$, and further suppose that the unit normal at each vertex $(m,n_0)$ is contained in the plane containing the discrete curvature line, i.e.\ $F_{m,n_0} + N_{m,n_0} \in \mathcal{P}$.

If we extend $F$ to the domain $\tilde{D} := \{(m, n) \in \mathbb{Z}^2 : n > n_0\}$ by reflecting the vertices across the plane $\mathcal{P}$, then as mentioned in Remark \ref{rema:normalBundle}, the unit normal $N$ also gets uniquely determined on the extended domain.
The uniqueness of the unit normal and the symmetry of the discrete net then forces the unit normal to be symmetric with respect to $\mathcal{P}$ as well, giving us the following reflective property of minimal and cmc nets:

\begin{prop}\label{prop:isothermicReflection}
	Let $F : D \to \mathbb{R}^3$ be a discrete isothermic minimal (resp.\ cmc) net with corresponding Gauss map $N$.
	Suppose that the discrete curvature line $F_{m, n_0}$ and the normal line congruence $L_{m, n_0}$ along this discrete curve lie in a plane $\mathcal{P}$.
	Extending $F$ to $\mathbb{Z}^2 = D \cup \tilde{D}$ so that the extension is symmetric with respect to $\mathcal{P}$ results in a discrete minimal (resp.\ cmc) net on $\mathbb{Z}^2$.
\end{prop}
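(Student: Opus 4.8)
The plan is to extend both $F$ and its Gauss map $N$ across $\mathcal{P}$ by reflection and then to verify, in order, circularity, preservation of the cross-ratio data, and invariance of the mean curvature. Write $R$ for the affine reflection of $\mathbb{R}^3$ across $\mathcal{P}$ and $R_0$ for its linear part, so that $R_0$ is an orthogonal map with $\det R_0 = -1$ fixing every direction parallel to $\mathcal{P}$. I would define the extension by $F_{m,n_0+k} := R(F_{m,n_0-k})$ for $k > 0$, which leaves the boundary curve $F_{m,n_0} \subset \mathcal{P}$ pointwise fixed. Circularity is then immediate: each elementary quadrilateral lying in $\tilde{D}$, as well as each quadrilateral straddling the row $n = n_0$, is the image under the isometry $R$ of a circular quadrilateral of $F$ in $D$, and $R$ maps circles to circles. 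The same reasoning handles the cross-ratio condition: a reflection is an orientation-reversing conformal map, under which the cross-ratio quaternion is conjugated, so for the concircular vertices at hand the real cross ratio is unchanged; extending the edge-labeling functions $a_{ij}, a_{il}$ by reflection therefore preserves \eqref{eqn:edgelabeling}.

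The next step is to construct the Gauss map on $\tilde{D}$. The hypothesis that $L_{m,n_0} \subset \mathcal{P}$ means $F_{m,n_0}, F_{m,n_0}+N_{m,n_0} \in \mathcal{P}$, so $N_{m,n_0}$ is parallel to $\mathcal{P}$ and hence fixed by $R_0$. I would set $N_{m,n_0+k} := R_0(N_{m,n_0-k})$. Since $R_0$ is a linear isometry, this is again a net into $S^2$ whose edges are parallel to those of the extended $F$ and whose normal lines meet those of neighbors, i.e.\ it forms a line congruence with $F$; moreover it matches the prescribed $N$ along $n = n_0$. By the uniqueness recorded in Remark \ref{rema:normalBundle}, this reflected field is exactly the Gauss map of the extended net.

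It remains to evaluate the curvatures on $\tilde{D}$. Because the extension acts through $R_0$ on every edge difference $\dif F$ and on $N$, and because under the identification $\wedge^2 \mathbb{R}^3 \cong \lieo$ one has $R_0 u \wedge R_0 v = R_0 (u \wedge v) R_0^{-1}$, each mixed area transforms by conjugation: $A(RF, R_0N)_{ijkl} = R_0\, A(F,N)_{ijkl}\, R_0^{-1}$ and likewise for $A(RF)_{ijkl}$. For the planar parallel quadrilaterals here $A(F,N)_{ijkl}$ and $A(F)_{ijkl}$ are proportional elements of $\lieo$, and conjugation by the fixed map $R_0$ preserves their scalar ratio; hence $H = -A(F,N)/A(F)$ (and similarly $K$) takes on each reflected quadrilateral exactly the value it had on its mirror image in $D$. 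Therefore $H \equiv 0$ \resp{$H \equiv c$} continues to hold on $\tilde{D}$, and together with the hypothesis on $D$ it holds on all of $\mathbb{Z}^2$, giving the claimed discrete minimal \resp{cmc} net.

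The step I expect to be most delicate is the last one: the reflection both reverses the cyclic orientation of each quadrilateral and is applied simultaneously to $F$ and to $N$, so one must check that these effects conspire to leave the value of $H$ — and, in the cmc case, the sign of the constant $c$ — genuinely unchanged rather than negated; phrasing the mixed-area transformation as conjugation on $\lieo$ is the clean way to see this, since a scalar ratio is insensitive to conjugation. A subsidiary point needing care is the verification that $R_0 N_{m,n_0-k}$ really satisfies the line-congruence and parallel-mesh conditions so that the uniqueness of Remark \ref{rema:normalBundle} applies; this rests squarely on $N_{m,n_0}$ being parallel to $\mathcal{P}$, which is precisely where the hypothesis $F_{m,n_0}+N_{m,n_0}\in\mathcal{P}$ is used.
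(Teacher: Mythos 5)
Your proposal is correct and takes essentially the same route as the paper: the paper likewise extends $F$ by reflecting across $\mathcal{P}$, invokes the uniqueness of the normal line congruence from Remark \ref{rema:normalBundle} (using exactly the hypothesis $F_{m,n_0}+N_{m,n_0}\in\mathcal{P}$) together with the symmetry of the reflected net to conclude that $N$ is itself symmetric with respect to $\mathcal{P}$, whence the mean curvature is preserved. Your additional verifications — circularity, the cross-ratio bookkeeping, and in particular the mixed-area computation in which the orientation reversal of the reflected quadrilaterals cancels between $A(F,N)$ and $A(F)$ so that $H$ (and the sign of the constant $c$ in the cmc case) is genuinely unchanged — are details the paper leaves implicit, and you carry them out correctly.
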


\section{Reflection properties of discrete minimal nets}

In this section, we take a closer look at the reflection properties of discrete minimal nets.

\subsection{Discrete isothermic minimal nets}
Exploiting the relationship between holomorphic functions on the complex plane and conformality, a definition of discrete holomorphic functions was given in \cite[Definition 8]{bobenko_discrete_1996-1} as:
\begin{defi}
	A map $g: \mathbb{Z}^2 \to \mathbb{R}^2 \cong \mathbb{C}$ is called a \emph{discrete holomorphic function} if
		\[
			\cratio(g_i, g_j, g_k, g_l) = \frac{a_{ij}}{a_{il}} \in \mathbb{R}_{<0}
		\]
	for some edge-labeling scalar functions $a_{ij}$ and $a_{il}$, i.e.\ satisfying the condition \eqref{eqn:edgelabeling}.
\end{defi}
Using the facts that 
\begin{itemize}
	\item cross ratios are invariant under Möbius transformations,
	\item a discrete isothermic net on the unit sphere corresponds to a discrete holomorphic function on the complex plane via stereographic projection,
	\item the Christoffel transform of a discrete minimal net is its own Gauss map, and
	\item the Christoffel transformation is involutive,
\end{itemize}
a Weierstrass representation for a discrete minimal net was given in \cite[Theorem 9]{bobenko_discrete_1996-1} as follows:

\begin{fact}\label{fact:Weierstrass}
	For a discrete holomorphic function $g$ with cross ratio factorizing functions $a_{ij}$ and $a_{il}$, a discrete isothermic net $F$ defined via
	\[\left\{
		\begin{aligned}
			\dif F_{ij} &= a_{ij}\Re\left((1 - g_i g_j, \sqrt{-1}(1 + g_i g_j), g_i + g_j) \frac{1}{\dif g_{ij}}\right) \\
			\dif F_{il} &= a_{il}\Re\left((1 - g_i g_l, \sqrt{-1}(1 + g_i g_l), g_i + g_l) \frac{1}{\dif g_{il}}\right)
		\end{aligned}
	\right.\]
	becomes a discrete isothermic minimal net.
	Furthermore, any discrete isothermic minimal net can be obtained via some discrete holomorphic function $g$.
\end{fact}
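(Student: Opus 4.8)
The plan is to construct $F$ structurally as the Christoffel transform of its Gauss map, which makes well-definedness, isothermicity, minimality, and the converse all fall out of the four quoted facts, leaving only one genuine computation. First I would regard the given discrete holomorphic $g$ as the stereographic image of a sphere net: set $N := \sigma^{-1}(g)$, where $\sigma$ denotes stereographic projection. Since $\sigma$ is a Möbius transformation and cross ratios are Möbius invariant, $N$ is a discrete isothermic net taking values in $S^2$ with $\cratio(N_i,N_j,N_k,N_l) = \cratio(g_i,g_j,g_k,g_l) = a_{ij}/a_{il}$ on every elementary quadrilateral; in particular $N$ is isothermic with the same cross ratio factorizing functions $a_{ij}$ and $a_{il}$ as $g$.

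Next I would define $F := N^*$, the Christoffel transform of $N$, which exists precisely because $N$ is isothermic and is again isothermic. By the defining relation $\dif F_{ij} = \tfrac{a_{ij}}{\|\dif N_{ij}\|^2}\,\dif N_{ij}$ (and likewise on the $il$-edges), every edge of $F$ is parallel to the corresponding edge of $N$, so $F$ is a parallel net to the $S^2$-valued $N$; hence $N$ is a discrete Gauss map of $F$. For minimality I would show $A(F,N)_{ijkl} = 0$, so that $H \equiv 0$ by Definition \ref{def:curvatures}. Writing $A(F,N)_{ijkl}$ through the diagonals and substituting the edgewise-proportionality $\dif F = \tfrac{a}{\|\dif N\|^2}\dif N$ reduces the vanishing, after using the closing relation of $N$ around the quadrilateral, to the single scalar identity $\lvert a_{ij}/a_{il}\rvert = \frac{\|\dif N_{ij}\|\,\|\dif N_{lk}\|}{\|\dif N_{jk}\|\,\|\dif N_{il}\|}$; this is exactly the concircular cross-ratio formula for $N$, which holds because $N$ is circular with cross ratio $a_{ij}/a_{il}$. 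Thus $F$ is a discrete isothermic minimal net with Gauss map $N$.

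It then remains to identify the abstract net $F = N^*$ with the explicit Weierstrass formulas, and this is the main computational step. Substituting the standard inverse stereographic parametrization $N = \sigma^{-1}(g)$ into $\dif F_{ij} = \tfrac{a_{ij}}{\|\dif N_{ij}\|^2}\,\dif N_{ij}$, applying the chordal-length identity $\|\dif N_{ij}\|^2 = \dfrac{4\,\lvert \dif g_{ij}\rvert^2}{(1+\lvert g_i\rvert^2)(1+\lvert g_j\rvert^2)}$, and simplifying the vector $N_j - N_i$, the factors $(1+\lvert g_i\rvert^2)(1+\lvert g_j\rvert^2)$ cancel and a conjugate $\overline{\dif g_{ij}}$ emerging from $N_j-N_i$ combines with $\tfrac{1}{\lvert \dif g_{ij}\rvert^2}$ to produce $\tfrac{1}{\dif g_{ij}}$, yielding $\dif F_{ij} = a_{ij}\Re\bigl((1-g_ig_j,\ \sqrt{-1}(1+g_ig_j),\ g_i+g_j)\,\tfrac{1}{\dif g_{ij}}\bigr)$ and the analogous expression for $\dif F_{il}$. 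No separate verification that these formulas close up around each quadrilateral is needed, since $F$ was defined as a genuine Christoffel transform.

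Finally, for the converse I would take an arbitrary discrete isothermic minimal net $F$ with Gauss map $N$. Since $N$ is $S^2$-valued and parallel to $F$, and since the Christoffel transform of a minimal net is its own Gauss map, we have $N = F^*$, so $N$ is isothermic with the same factorizing functions as $F$. Stereographic projection together with Möbius invariance then gives a discrete holomorphic function $g := \sigma(N)$ with cross ratio factorizing functions $a_{ij}, a_{il}$, and feeding this $g$ into the construction above returns $N^* = (F^*)^* = F$ by involutivity of the Christoffel transformation. Hence every discrete isothermic minimal net arises from some discrete holomorphic function. The hard part throughout is the explicit stereographic-plus-Christoffel computation in the third paragraph; the structural facts dispose of well-definedness, isothermicity, minimality, and surjectivity with essentially no further work.
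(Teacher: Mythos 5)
Your proposal is correct and follows essentially the same route the paper itself indicates: the statement is quoted as a Fact from \cite[Theorem 9]{bobenko_discrete_1996-1}, and the four bulleted ingredients the paper lists immediately before it (Möbius invariance of the cross ratio, stereographic projection identifying sphere nets with discrete holomorphic functions, the Gauss map of a discrete minimal net as its Christoffel dual, and involutivity of the Christoffel transformation) are exactly the skeleton you flesh out, with $N = \sigma^{-1}(g)$, $F = N^*$, and the chordal-distance computation recovering the explicit formulas. Your direct verification of $A(F,N)_{ijkl} = 0$ is a sensible adaptation of the minimality claim to this paper's mixed-area curvature conventions (Definitions \ref{def:curvatures} and \ref{def:minimality}) rather than the original formulation of \cite{bobenko_discrete_1996-1}, and is consistent with the paper's framework.
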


\subsection{Discrete asympotic minimal nets}
In this section, we make use of shift notations:
	\[
		F = F_{m,n}, \: F_1 = F_{m+1,n}, \: F_{\bar{1}} = F_{m-1,n}, \: F_2 = F_{m,n+1}, \: F_{\bar{2}} = F_{m, n-1}.
	\]
Discrete asymptotic nets were defined as follows in several different contexts (see, for example \cite{sauer_parallelogrammgitter_1950, wunderlich_zur_1951, sauer_differenzengeometrie_1970, bobenko_discrete_2008}):

\begin{defi}
	A discrete net $\tilde{F} : \mathbb{Z}^2 \to \mathbb{R}^3$ is a \emph{discrete asymptotic net} if each vertex and its neighboring four vertices are coplanar, i.e.\ $\tilde{F}, \tilde{F}_1, \tilde{F}_{\bar{1}}, \tilde{F}_2, \tilde{F}_{\bar{2}} \in \mathcal{P}_{m,n}$ for some plane $\mathcal{P}_{m,n}$ for each $(m,n)$. 
\end{defi}
Following \cite{bobenko_discrete_2008}, we assume that the discrete asymptotic nets here are non-degenerate, i.e.\ $\tilde{F}_i$, $\tilde{F}_j$, $\tilde{F}_k$, $\tilde{F}_l$ are non-planar.

For a discrete asymptotic net $\tilde{F}$, the Gauss map $N$ is defined as the unit normal to the tangent plane $\mathcal{P}_{m,n}$.
Similar to discrete curvature lines, discrete asymptotic lines can be defined as follows:
\begin{defi}
	Let $\tilde{F} : \mathbb{Z}^2 \to \mathbb{R}^3$ be a discrete asymptotic net. A discrete space curve $\tilde{F}_{m, n_0}$ (resp. $\tilde{F}_{m_0, n}$) depending on $m$ (resp. $n$) for each $n_0 \in \mathbb{Z}$ (resp. $m_0 \in \mathbb{Z}$) is called a \emph{discrete asymptotic line}.
\end{defi}

Recently, a representation of discrete asymptotic minimal net, where the minimality comes via the edge-constraint condition, was given in \cite[Definition 3.1, Theorem 3.14, Lemma 3.17]{hoffmann_discrete_2017}:
\begin{fact}\label{fact:Weierstrass2}
	For a discrete holomorphic function $g$ with cross ratio factorizing functions $a_{ij}$ and $a_{il}$, a discrete asymptotic net $\tilde{F}$ defined via
	\[\left\{
		\begin{aligned}
			\dif \tilde{F}_{ij} &= a_{ij}\Re\left((1 - g_i g_j, \sqrt{-1}(1 + g_i g_j), g_i + g_j) \frac{\sqrt{-1}}{\dif g_{ij}}\right) \\
			\dif \tilde{F}_{il} &= a_{il}\Re\left((1 - g_i g_l, \sqrt{-1}(1 + g_i g_l), g_i + g_l) \frac{\sqrt{-1}}{\dif g_{il}}\right)
		\end{aligned}
	\right.\]
	becomes a discrete asymptotic minimal net, in the sense of the discrete minimal edge-constraint nets.
\end{fact}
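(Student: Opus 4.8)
The plan is to realize $\tilde F$ as the conjugate of the discrete isothermic minimal net $F$ produced from the same data $(g, a_{ij}, a_{il})$ by Fact \ref{fact:Weierstrass}. Writing
\[
	W_{ij} := \left(1 - g_i g_j,\, \sqrt{-1}(1 + g_i g_j),\, g_i + g_j\right)\frac{1}{\dif g_{ij}} \in \mathbb{C}^3,
\]
one has $\dif F_{ij} = a_{ij}\Re(W_{ij})$ while $\dif \tilde F_{ij} = a_{ij}\Re(\sqrt{-1}\,W_{ij}) = -a_{ij}\operatorname{Im}(W_{ij})$, so $\tilde F$ arises from $F$ exactly as a conjugate minimal surface arises from its curvature-line parametrization in the smooth theory. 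Throughout I would carry along the common Gauss map $N_i := (1 + |g_i|^2)^{-1}(2\Re g_i,\, 2\operatorname{Im} g_i,\, |g_i|^2 - 1) \in S^2$, the stereographic image of $g$.

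First I would establish that $\tilde F$ is a well-defined net, i.e.\ that the discrete one-form $\omega_{ij} := a_{ij} W_{ij}$, which is antisymmetric in its indices, closes up around every elementary quadrilateral. The discrete holomorphicity of $g$ together with the edge-labeling relations \eqref{eqn:edgelabeling} force the cross-ratio identity $\cratio(g_i,g_j,g_k,g_l) = a_{ij}/a_{il}$, and a direct componentwise computation shows that this identity is precisely what makes $\omega_{ij} + \omega_{jk} + \omega_{kl} + \omega_{li} = 0$ hold in $\mathbb{C}^3$ --- the complex closedness underlying Fact \ref{fact:Weierstrass}, valid over $\mathbb{C}$ and not merely for its real part. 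Since $\Re$ and $\operatorname{Im}$ both annihilate a vanishing complex sum, the closedness of $F$ (whose edge vectors are $\Re\omega_{ij}$) immediately yields the closedness of $\tilde F$ (whose edge vectors are $-\operatorname{Im}\omega_{ij}$), so $\tilde F$ is well defined.

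Next I would verify the Lelieuvre-type relation $\dif \tilde F_{ij}\cdot N_i = \dif \tilde F_{ij}\cdot N_j = 0$ by substituting the explicit $W_{ij}$ and $N$ above; equivalently, $\dif\tilde F_{ij}$ is parallel to $N_i \times N_j$. Since every edge emanating from a fixed vertex is then orthogonal to the single normal at that vertex, the five-point star at each vertex is coplanar, which is exactly the asymptotic condition, with $N$ serving as the unit normal to the tangent planes $\mathcal{P}_{m,n}$. The same orthogonality gives at once the edge-constraint $(N_i + N_j)\cdot\dif\tilde F_{ij} = 0$. Finally, by the correspondence recalled in the excerpt between discrete holomorphic functions and discrete isothermic nets on $S^2$ via stereographic projection, $N$ is itself a discrete isothermic net on the sphere; this is the hallmark of vanishing mean curvature, and since conjugation leaves the Gauss map unchanged, the minimality of $F$ is inherited by $\tilde F$ in the edge-constraint sense of \cite{hoffmann_discrete_2017}.

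The main obstacle I expect is twofold. The perpendicularity computation $\dif\tilde F_{ij}\cdot N_i = 0$, though elementary, must be carried out carefully with the $\sqrt{-1}$-twisted data and the normalization of $N$, and the sign conventions in the cross ratio must be tracked to confirm the full complex closedness rather than only its real part. More delicate is reconciling the mixed-area/Steiner definition of mean curvature used for circular nets in Definition \ref{def:curvatures} with the edge-constraint definition of \cite{hoffmann_discrete_2017}: one must check that ``$H \equiv 0$'' transfers faithfully between the two frameworks, so that conjugation genuinely preserves minimality and $\tilde F$ is identified as \emph{minimal} in the edge-constraint sense.
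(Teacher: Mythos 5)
Two remarks before the substance: the paper never proves this statement. It is labelled a \emph{Fact} and imported wholesale from \cite[Definition 3.1, Theorem 3.14, Lemma 3.17]{hoffmann_discrete_2017}, so there is no internal proof to compare against; what you have written is in effect a reconstruction of the argument of that reference, and your overall route --- realizing $\tilde{F}$ as the conjugate of the isothermic minimal net of Fact \ref{fact:Weierstrass} sharing the Gauss map $N$ --- is exactly the identification made in Lemma 3.17 there.

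Your first two steps are correct and can be made fully rigorous along the lines you indicate. For the complex closedness: writing $\phi_{ij} = a_{ij}/\dif g_{ij}$, closedness of $\omega$ in $\mathbb{C}^3$ around a quadrilateral reduces to the three identities $\sum \phi = 0$, $\sum \phi\, g g' = 0$, $\sum \phi\,(g + g') = 0$ (cyclic sums over oriented edges). The first is precisely the factorized cross-ratio condition $\cratio(g_i,g_j,g_k,g_l) = a_{ij}/a_{il}$; the second is the first applied to $1/g$, using Möbius invariance of the cross ratio; the third follows by polarizing the second under $g \mapsto g + t$. For the Lelieuvre step: a direct computation gives $W_{ij}\cdot N_i = -1$ and $W_{ij}\cdot N_j = +1$, both \emph{real}, so $\operatorname{Im}(W_{ij})$ is orthogonal to both vertex normals; hence every vertex star of $\tilde{F}$ lies in the plane through the vertex orthogonal to $N$, which is the asymptotic condition, and the edge constraint $(N_i + N_j)\cdot \dif\tilde{F}_{ij} = 0$ is automatic.

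The genuine gap is in your last step. The claim that ``since conjugation leaves the Gauss map unchanged, the minimality of $F$ is inherited by $\tilde{F}$'' is not an argument in the edge-constraint framework: there, the mean curvature of a net is defined per (generically non-planar) quadrilateral from \emph{that net's own} quad together with its normal quad, via a generalized mixed-area/Steiner formula for the offsets $\tilde{F} + tN$. Two nets sharing the same Gauss map have, in general, different mean curvatures --- $F$ and $\tilde{F}$ have entirely different quads, planar versus skew --- so nothing transfers formally. Likewise, your ``hallmark'' heuristic (Gauss map isothermic on $S^2$ implies minimal) is a statement about circular nets via Christoffel duality in \cite{bobenko_discrete_1996-1}, and Definition \ref{def:curvatures} of the present paper also applies only to circular nets, so neither can be invoked for the A-net $\tilde{F}$. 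Establishing $H \equiv 0$ for $\tilde{F}$ requires computing the edge-constraint mean curvature of $\tilde{F}$ directly from its skew quads and the normals; that computation is the actual content of Theorem 3.14 of \cite{hoffmann_discrete_2017}. You correctly flag this reconciliation as the delicate point, but flagging it is not closing it, and since the statement to be proved is precisely minimality \emph{in the edge-constraint sense}, this is the crux of the proof rather than a peripheral check.
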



\begin{rema}\label{rema:conjugateRemark}
	It was further shown in \cite[Lemma 3.17]{hoffmann_discrete_2017} that $\tilde{F}$ defined from a discrete holomorphic function $g$ via Fact \ref{fact:Weierstrass2} shares the same unit normal as the discrete isothermic minimal net $F$ defined from the same $g$ via Fact \ref{fact:Weierstrass}. In such case, $\tilde{F}$ is called the conjugate discrete minimal net of $F$.
\end{rema}

\subsection{Reflection properties of discrete minimal nets}
To consider planar discrete space curves, it will be advantageous to use the following notation to denote three consecutive edges:
	\[
		\dif F := F_{m+1, n} - F_{m, n}, \quad \dif F_1 := F_{m+2, n} - F_{m+1, n}, \quad \dif F_{\bar{1}} := F_{m, n} - F_{m-1, n}.
	\]
We first focus on circular nets: let $F$ be a circular net.
Then we have the following lemma, characterizing planar discrete curvature lines in terms of the Gauss map.

\begin{lemm}
	A discrete curvature line on a circular net $F$ is planar if and only if the image of the Gauss map $N$ along the curvature line is contained in a circle.
\end{lemm}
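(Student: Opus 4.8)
The plan is to reduce both conditions to statements about the edge vectors along the curve and then exploit that $F$ and $N$ are parallel nets. Fix the curvature line in question, say $F_{m,n_0}$ with $m$ varying, and abbreviate its edges by $\dif F_m := F_{m+1,n_0} - F_{m,n_0}$, writing $\dif N_m$ for the corresponding edges of the Gauss map. Since $N$ is a parallel net, each $\dif N_m$ is parallel to $\dif F_m$, so there are scalars $\lambda_m$ with $\dif N_m = \lambda_m\,\dif F_m$; the nondegeneracy of the net, equivalently the fact that neighboring normal lines $L_{m,n_0}$ and $L_{m+1,n_0}$ are not parallel, guarantees $\lambda_m \neq 0$.

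The first real step is the elementary characterization of planarity: a discrete curve is planar exactly when all of its edge vectors lie in a common two-dimensional linear subspace, i.e.\ when there is a single nonzero vector $\nu$ with $\dif F_m \cdot \nu = 0$ for every $m$. I would then transfer this condition across the relation $\dif N_m = \lambda_m\,\dif F_m$. If $\dif F_m \cdot \nu = 0$ for all $m$, then $\dif N_m \cdot \nu = \lambda_m(\dif F_m \cdot \nu) = 0$ for all $m$, so the points $N_{m,n_0}$ are coplanar, lying in a plane with the very same normal $\nu$; conversely, because each $\lambda_m \neq 0$, the relation $\dif N_m \cdot \nu = 0$ forces $\dif F_m \cdot \nu = 0$. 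Hence $F_{m,n_0}$ is planar if and only if the points $N_{m,n_0}$ are coplanar.

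The last step invokes that $N$ takes values on $S^2$: a subset of the sphere is coplanar precisely when it lies in the intersection of the sphere with a plane, and, once there are at least two distinct values so that the plane is not tangent, this intersection is a circle. Therefore coplanarity of the $N_{m,n_0}$ is the same as their lying on a circle, which closes the chain of equivalences. The one delicate point, and the place I expect to need the most care, is the backward implication, where the argument genuinely uses $\lambda_m \neq 0$; this is exactly the nondegeneracy supplied by the discrete line congruence structure of Remark \ref{rema:normalBundle}, so I would make sure to state that assumption explicitly before running the converse.
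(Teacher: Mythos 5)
Your proof is correct and takes essentially the same route as the paper's: both transfer planarity from $F$ to $N$ through the edge-parallelism of the two nets and then identify a planar subset of $S^2$ with a circle. The differences are only in bookkeeping---the paper encodes planarity locally via $\det(\dif F_{\bar{1}}, \dif F, \dif F_1) = 0$ on consecutive edges, whereas you use a single global normal vector $\nu$ and explicitly track the nonvanishing proportionality factors $\lambda_m$; if anything, your version is slightly more careful, since the paper's ``equivalent'' step silently uses exactly the nondegeneracy $\lambda_m \neq 0$ that you isolate.
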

\begin{proof}
	Without loss of generality, the planarity of a discrete curvature line is equivalent to the condition
		\[
			\det(\dif F_{\bar{1}}, \dif F, \dif F_1) = 0
		\]
	on any three consecutive edges.
	However, since $F$ and $N$ are parallel meshes, the above condition is equivalent to
		\[
			\det(\dif N_{\bar{1}}, \dif N, \dif N_1) = 0.
		\]
	Therefore, a discrete curvature line is planar if and only if the image of the Gauss map along the curvature line is planar, i.e.\ contained in a circle.
\end{proof}
Hence, by further requiring that the normal line congruence, i.e.\ the linear span of unit normals placed on the vertices, along the planar curvature line is also included in the same plane, we obtain the following corollary, also mentioned briefly in \cite{bucking_approximation_2007}.

\begin{coro}\label{coro:greatCircle}
	The normal line congruence along a planar discrete curvature line is contained in the same plane if and only if the image of the Gauss map along the curvature line is contained in a great circle.
\end{coro}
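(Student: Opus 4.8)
The plan is to translate the geometric condition ``$L_i \subset \mathcal{P}$'' into a linear condition on the unit normals $N_i$, and then to invoke the elementary fact that, for points of the unit sphere, lying in a plane through the origin is exactly the condition of lying on a great circle.

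First I would fix notation: let $\nu$ be a unit normal vector of the plane $\mathcal{P}$, and let $\mathcal{P}_0 = \nu^\perp$ denote the linear plane through the origin parallel to $\mathcal{P}$. Since the discrete curvature line lies in $\mathcal{P}$, each edge $\dif F$ along the curve joins two points of $\mathcal{P}$ and is therefore parallel to $\mathcal{P}_0$; because $F$ and $N$ are parallel meshes, each edge $\dif N$ along the curve is likewise parallel to $\mathcal{P}_0$. Combined with the preceding lemma --- which guarantees that the image of $N$ along a planar curvature line lies in a circle $C \subset S^2$ --- this shows that the plane carrying $C$ (spanned by its chords $\dif N$) is parallel to $\mathcal{P}_0$.

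The central step is the equivalence
\[
	L_i \subset \mathcal{P} \iff N_i \cdot \nu = 0 \iff N_i \in \mathcal{P}_0.
\]
Indeed, since the vertex $F_i$ already lies in $\mathcal{P}$, the line $L_i$ through $F_i$ with direction $N_i$ is contained in $\mathcal{P}$ precisely when its direction $N_i$ is parallel to $\mathcal{P}$, i.e.\ orthogonal to $\nu$. As $N_i \in S^2$, the condition $N_i \in \mathcal{P}_0$ says exactly that $N_i$ lies on the great circle $S^2 \cap \mathcal{P}_0$. Running this over all vertices of the curve, the normal line congruence lies in $\mathcal{P}$ if and only if the image of $N$ along the curve lies in $S^2 \cap \mathcal{P}_0$; equivalently, since we already know $C$ lies in a plane parallel to $\mathcal{P}_0$, the condition is that this plane passes through the origin, which is precisely the statement that $C$ is a great circle.

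I expect the only delicate point to be bookkeeping rather than substance: one must confirm that the plane of the circle $C$ supplied by the lemma is genuinely parallel to $\mathcal{P}$, so that ``$C$ is a great circle'' and ``$N_i \in \mathcal{P}_0$'' refer to the same linear plane $\mathcal{P}_0$. This parallelism follows from the parallel-mesh property of $F$ and $N$ as noted above. Once that identification is in place, the corollary is immediate from the observation that a point of $S^2$ lies on a linear plane through the origin if and only if it lies on the corresponding great circle.
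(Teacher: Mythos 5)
Your proposal is correct and matches the paper's (very brief) reasoning: the paper derives the corollary directly from the preceding lemma by observing that the normal lines lie in $\mathcal{P}$ exactly when the unit normals are parallel to $\mathcal{P}$, i.e.\ lie in the linear plane through the origin, which upgrades the circle from the lemma to a great circle. Your write-up simply makes explicit the bookkeeping (the equivalence $L_i \subset \mathcal{P} \iff N_i \in \nu^\perp$ and the parallelism of the circle's plane to $\mathcal{P}$ via the parallel-mesh property) that the paper leaves implicit.
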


Switching our focus to discrete asymptotic nets, now let $\tilde{F}$ be a discrete asymptotic net.
Then we can prove the following lemma characterizing a discrete asymptotic line that is a straight line (see also \cite{bucking_approximation_2007}).
\begin{lemm}\label{lemm:anetGreat}
	A discrete asymptotic line on a discrete asymptotic net $\tilde{F}$ is a straight line if and only if the image of the Gauss map $N$ along the discrete asymptotic line is contained in a great circle.
\end{lemm}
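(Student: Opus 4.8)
The plan is to reduce the whole statement to a single orthogonality relation along the asymptotic line. Fix $n_0$ and consider the edge $\dif\tilde{F} := \tilde{F}_{m+1,n_0} - \tilde{F}_{m,n_0}$ of the asymptotic line $\tilde{F}_{m,n_0}$. By the definition of a discrete asymptotic net, $\tilde{F}_{m+1,n_0}$ is a neighbor of $\tilde{F}_{m,n_0}$ and hence lies in the tangent plane $\mathcal{P}_{m,n_0}$; symmetrically, $\tilde{F}_{m,n_0}$ lies in $\mathcal{P}_{m+1,n_0}$. Thus both endpoints of this edge lie in each of the two planes $\mathcal{P}_{m,n_0}$ and $\mathcal{P}_{m+1,n_0}$, which gives the two relations $\dif\tilde{F} \cdot N_{m,n_0} = 0$ and $\dif\tilde{F} \cdot N_{m+1,n_0} = 0$. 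I would also record the elementary fact that the Gauss image lies in a great circle precisely when there is a fixed unit vector $v$ with $N_{m,n_0}\cdot v = 0$ for every $m$, since a great circle is the intersection of $S^2$ with a plane through the origin.

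For the forward direction, suppose the asymptotic line is a straight line with direction $v$. Then every edge $\dif\tilde{F}$ along the line is a scalar multiple of $v$, and combining this with $\dif\tilde{F} \cdot N_{m,n_0} = 0$ yields $v \cdot N_{m,n_0} = 0$ at each vertex, hence along the whole line. By the remark above, the Gauss image lies in a great circle.

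For the converse, suppose the Gauss image lies in a great circle, so that $N_{m,n_0}\cdot v = 0$ for a fixed unit vector $v$ and all $m$. From the orthogonality relations, each edge $\dif\tilde{F}$ is perpendicular to both $N_{m,n_0}$ and $N_{m+1,n_0}$. When these two normals are linearly independent, their common orthogonal complement is one-dimensional and spanned by $N_{m,n_0}\times N_{m+1,n_0}$; since $v$ is also orthogonal to both normals, this cross product is parallel to $v$, forcing $\dif\tilde{F}$ to be parallel to $v$. As this holds for every edge, the line is straight.

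The main obstacle is the degenerate possibility $N_{m,n_0}\parallel N_{m+1,n_0}$, where the cross-product argument collapses. I expect to rule this out using non-degeneracy: if two consecutive normals along the line were parallel, the tangent planes $\mathcal{P}_{m,n_0}$ and $\mathcal{P}_{m+1,n_0}$ would be parallel, and since they already share the two vertices $\tilde{F}_{m,n_0}$ and $\tilde{F}_{m+1,n_0}$ they would coincide in a single plane $\mathcal{P}$. Then the four corners $\tilde{F}_{m,n_0}$, $\tilde{F}_{m+1,n_0}$, $\tilde{F}_{m+1,n_0+1}$, $\tilde{F}_{m,n_0+1}$ of the adjacent elementary quadrilateral---each of which is one of the two line vertices or a neighbor thereof, and hence lies in $\mathcal{P}_{m,n_0}$ or $\mathcal{P}_{m+1,n_0}$---would all lie in $\mathcal{P}$, contradicting the standing assumption that the four corners of a quadrilateral are non-planar. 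This closes the remaining case of the converse.
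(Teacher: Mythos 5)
Your proof is correct and takes essentially the same route as the paper's: the forward direction uses that the tangent planes along a straight asymptotic line all contain its direction, and your converse, which identifies the edge as the common orthogonal complement of two consecutive normals both perpendicular to $v$, is just a linear-algebra rendering of the paper's synthetic argument that consecutive tangent planes, being perpendicular to the great-circle plane $\mathcal{Q}$, intersect along the edge in a line parallel to $\vec{v}$. Your explicit exclusion of the degenerate case $N_{m,n_0} \parallel N_{m+1,n_0}$, via the adjacent elementary quadrilateral becoming planar, correctly spells out the justification that the paper compresses into its single appeal to the non-degeneracy condition.
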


\begin{proof}
	To show one direction, suppose that a discrete asymptotic line $\tilde{F}_{m,n_0}$ is a straight line.
	Then the tangent planes $\mathcal{P}_{m, n_0}$ at each vertex along $\tilde{F}_{m,n_0}$ must include this straight line.
	Therefore, $N_{m, n_0}$ must be contained in the plane perpendicular to the straight line, i.e.\ the image of the Gauss map along the discrete asymptotic line is contained in a great circle.
	
	To show the other direction, now suppose that $N_{m, n_0}$ is contained in a great circle, and let $\mathcal{Q}$ denote the plane containing the great circle with a normal vector $\vec{v}$.
	Then all the tangent planes $\mathcal{P}_{m, n_0}$ must be perpendicular to $\mathcal{Q}$.
	Hence, from the non-degeneracy condition, any two consecutive tangent planes $\mathcal{P}$ and $\mathcal{P}_1$ must intersect along a line parallel to the normal vector $\vec{v}$.
	However, $\mathcal{P}$ and $\mathcal{P}_1$ intersect along the edge $\dif \tilde{F}$, i.e.\ $\dif \tilde{F} \parallel \vec{v}$, and it follows that $\tilde{F}_{m,n_0}$ must be a straight line in the direction of $\vec{v}$. (See Figure \ref{fig:lemmaNet}.)
\end{proof}

\begin{figure}[t]
	\centering
	\includegraphics[scale=0.7]{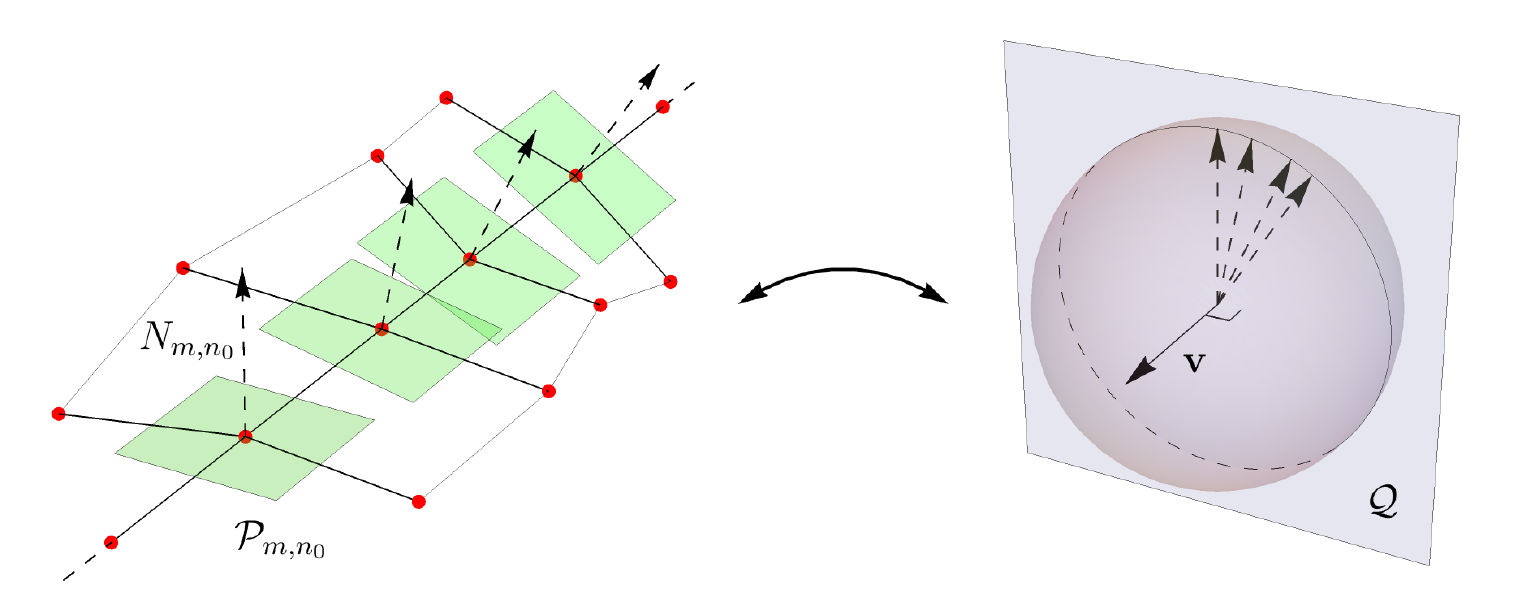}
	\caption{A discrete asymptotic net containing a straight line and its Gauss map.}
	\label{fig:lemmaNet}
\end{figure}
The fact that a discrete isothermic minimal net $F$ and its conjugate discrete asymptotic minimal net $\tilde{F}$ share the same Gauss map $N$, as mentioned in Remark \ref{rema:conjugateRemark}, immediately yields the following corollary.
\begin{coro}\label{coro:linetoplane}
	The normal line congruence along a planar discrete curvature line on a discrete isothermic minimal net $F$ is contained in the same plane if and only if the corresponding discrete asymptotic line on the conjugate discrete asymptotic minimal net $\tilde{F}$ is a straight line.
\end{coro}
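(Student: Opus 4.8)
The plan is to chain together the two characterizations already established — Corollary \ref{coro:greatCircle} on the isothermic side and Lemma \ref{lemm:anetGreat} on the asymptotic side — using the fact from Remark \ref{rema:conjugateRemark} that $F$ and its conjugate $\tilde{F}$ share a common Gauss map $N$. Since both results phrase their respective geometric conditions in terms of the image of $N$ lying in a great circle, the shared Gauss map will serve as the bridge translating one equivalence into the other.

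First I would fix a planar discrete curvature line $F_{m,n_0}$ on $F$ and the corresponding discrete asymptotic line $\tilde{F}_{m,n_0}$ on $\tilde{F}$, taken along the same index $n_0$ with the same running parameter $m$. Because $F$ and $\tilde{F}$ have the same Gauss map $N$, the restriction $N_{m,n_0}$ is literally the same discrete curve on $S^2$ whether it is regarded as the Gauss image along $F$ or along $\tilde{F}$. This identification is the key observation to record carefully, since the entire argument hinges on the two lines being indexed so that they carry the identical image under $N$.

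Next I would apply Corollary \ref{coro:greatCircle}: the normal line congruence along the planar discrete curvature line on $F$ is contained in the plane of that curvature line if and only if $N_{m,n_0}$ lies in a great circle. Then I would apply Lemma \ref{lemm:anetGreat} to $\tilde{F}$: the discrete asymptotic line $\tilde{F}_{m,n_0}$ is a straight line if and only if $N_{m,n_0}$ lies in a great circle. Because the intermediate condition — that $N_{m,n_0}$ be contained in a great circle — is verbatim the same statement in both equivalences (by the shared Gauss map), the two outer conditions are equivalent, which is precisely the claim.

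The argument is thus essentially a transitivity of two biconditionals, so I anticipate no computational obstacle. The only point requiring genuine care — and the step I expect to be most delicate — is verifying that the \emph{corresponding} asymptotic line on $\tilde{F}$ is exactly the one sharing the same index as the curvature line on $F$, so that the image of $N$ along both is truly identical; this rests on the Weierstrass representations of Fact \ref{fact:Weierstrass} and Fact \ref{fact:Weierstrass2} being built from the very same discrete holomorphic function $g$, which is what underlies the conjugacy in Remark \ref{rema:conjugateRemark}.
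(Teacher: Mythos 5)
Your proposal is correct and matches the paper's argument exactly: the paper likewise derives Corollary \ref{coro:linetoplane} immediately by combining Corollary \ref{coro:greatCircle} and Lemma \ref{lemm:anetGreat} through the shared Gauss map of Remark \ref{rema:conjugateRemark}. Your extra care about the corresponding lines carrying the identical Gauss image (via the common discrete holomorphic function $g$ in Facts \ref{fact:Weierstrass} and \ref{fact:Weierstrass2}) is a sound elaboration of a point the paper leaves implicit.
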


Now we prove a reflection principle for discrete asymptotic minimal nets.
Recall that for some $n_0 \in \mathbb{Z}$, $D$ and $\tilde{D}$ were defined as $D := \{(m, n) \in \mathbb{Z}^2 : n \leq n_0\}$ and $\tilde{D} := \{(m, n) \in \mathbb{Z}^2 : n > n_0\}$, respectively.
\begin{theo}\label{theo:asymptoticReflection}
	Let $n_0 \in \mathbb{Z}$, and $\tilde{F} : D \subset \mathbb{Z}^2 \to \mathbb{R}^3$ be a discrete asymptotic minimal net with corresponding Gauss map $N$. Suppose that the discrete asymptotic line $\tilde{F}_{m, n_0}$ is a straight line $\ell$. Extending $\tilde{F}$ to the domain $\mathbb{Z}^2 = D \cup \tilde{D}$ so that the extension is symmetric with respect to the line $\ell$, the extension is a discrete asymptotic minimal net on $\mathbb{Z}^2$.
\end{theo}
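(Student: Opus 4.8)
The plan is to realize the symmetrization as the half-turn $R \in \SO$ about the axis $\ell$, to extend both $\tilde F$ and its Gauss map $N$ by this rotation, and then to check the two defining properties --- the coplanarity (asymptotic) condition and edge-constraint minimality --- only along the interface row $n = n_0$, since everywhere else the extended net is either the original net or a congruent copy of it.

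First I would fix the geometry of the straight asymptotic line using Lemma \ref{lemm:anetGreat}. Since $\tilde F_{m,n_0}$ is the straight line $\ell$, the Gauss map along it lies on a great circle spanning a plane $\mathcal Q$ with unit normal $\vec v$, and from the proof of that lemma $\ell$ runs in the direction $\vec v$; hence $\ell \perp \mathcal Q$ and each $N_{m,n_0} \in \mathcal Q$ is perpendicular to the rotation axis, so that $R\,N_{m,n_0} = -N_{m,n_0}$. I would then define the extension for $n > n_0$ by $\tilde F_{m,n} := R\,\tilde F_{m,2n_0-n}$ and $N_{m,n} := -R\,N_{m,2n_0-n}$; the points of $\ell$ are fixed by $R$, and the relation $R\,N_{m,n_0} = -N_{m,n_0}$ guarantees that both formulas agree with the given data at $n = n_0$, so that the extended pair $(\tilde F, N)$ is single-valued.

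Next I would verify the asymptotic (coplanarity) condition. Away from the interface the stars are either contained in $D$ (coplanar by hypothesis) or are the exact $R$-image of a star in $D$ (coplanar because $R$ is an isometry), so the only genuinely new check is the star at a vertex $(m,n_0)$, whose single new neighbor is $\tilde F_{m,n_0+1} = R\,\tilde F_{m,n_0-1}$. The key observation is that the tangent plane $\mathcal P_{m,n_0}$ contains $\ell$ --- it holds $\tilde F_{m,n_0}$ and the two line-edges to $\tilde F_{m\pm 1,n_0}$ --- and a plane through the rotation axis $\ell$ is $R$-invariant; therefore $R\,\tilde F_{m,n_0-1} \in R\,\mathcal P_{m,n_0} = \mathcal P_{m,n_0}$, which together with the three neighbors in $D$ already lying in $\mathcal P_{m,n_0}$ (as $N$ is the Gauss map of $\tilde F$ on $D$) makes the whole star planar. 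Because every edge of an asymptotic net lies in the tangent planes at both of its endpoints, the edge-constraint relation $\langle \dif\tilde F_{ij}, N_i + N_j\rangle = 0$ is automatic once $N$ is the genuine tangent-plane normal, so no separate computation is needed; I would only note that the sign choice $N_{m,n} = -R\,N_{m,2n_0-n}$ is exactly what keeps $N$ the correctly oriented, single-valued normal field across the interface.

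Finally, for minimality I would again reduce to the interface. Every elementary quadrilateral of the extension is either a quad of the original net or the $R$-image, with reversed combinatorial orientation $(ijkl)\mapsto(lkji)$ and with $N$ replaced by $-N$, of the original quad in rows $(n_0-1, n_0)$; the latter are exactly the crossing quads in row $n_0$. Since the edge-constraint mean curvature of \cite{hoffmann_discrete_2017} is a congruence-invariant quantity that merely changes sign under an orientation reversal of the parameter lattice and under $N \mapsto -N$, the condition $H \equiv 0$ is preserved under all of these operations; hence each crossing quad inherits minimality (and, being a rigid-motion copy, non-degeneracy) from the genuinely minimal quad of which it is an image. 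I expect the main obstacle to be precisely this last bookkeeping: pinning down the exact edge-constraint minimality functional of \cite{hoffmann_discrete_2017} and confirming that the simultaneous half-turn of the surface, the induced sign flip of $N$, and the reversal of the lattice $n$-direction together leave the vanishing of $H$ intact rather than producing a spurious term. As an independent cross-check, Corollary \ref{coro:linetoplane} identifies the straight line $\ell$ with a planar curvature line whose normals lie in that plane on the conjugate discrete isothermic minimal net, where Proposition \ref{prop:isothermicReflection} already furnishes a planar reflection; matching the two reflections would confirm the minimality of the extension from the conjugate side.
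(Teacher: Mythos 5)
Your proposal is correct in substance but takes a genuinely different route from the paper. The paper argues by conjugation: it passes to the conjugate discrete isothermic minimal net $F$ on $D$, uses Corollary \ref{coro:linetoplane} to see that the curvature line $F_{m,n_0}$ together with its normal line congruence lies in a plane, extends $F$ by the planar reflection principle (Proposition \ref{prop:isothermicReflection}), conjugates back via Fact \ref{fact:Weierstrass2} to obtain an asymptotic minimal extension of $\tilde F$ on all of $\mathbb{Z}^2$, and only then proves that this extension is the symmetric one: writing $\hat F = T\tilde F$ with Gauss map $\hat N = -TN$ for the half-turn $T$ about $\ell$, the symmetry of $N$ across the plane $\mathcal{Q}_2$ of the great circle gives $N_{m,n_0+k} = \hat N_{m,n_0-k}$, and $\tilde F_{m,n_0+k} = \hat F_{m,n_0-k}$ then follows from the shared initial condition along $\ell$ and the representation of Fact \ref{fact:Weierstrass2}. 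You instead take the symmetric extension as given and verify the defining properties directly at the interface; your key geometric observations are correct and match the ingredients of Lemma \ref{lemm:anetGreat} (each tangent plane $\mathcal{P}_{m,n_0}$ contains the two edges along $\ell$, hence contains $\ell$ and is invariant under the half-turn, so $R\tilde F_{m,n_0-1} \in \mathcal{P}_{m,n_0}$; and $N_{m,n_0}\perp\ell$ forces $RN_{m,n_0} = -N_{m,n_0}$, making the extension of $(\tilde F, N)$ single-valued), and your decomposition of the extended net into original quads and rigid-motion copies with $N \mapsto -N$ and reversed lattice orientation is accurate. What the paper's route buys is that minimality comes for free from Fact \ref{fact:Weierstrass2}, so the edge-constraint curvature theory of \cite{hoffmann_discrete_2017} never needs to be unpacked; what your route buys is a self-contained, elementary interface check that avoids both the Weierstrass representation and the implicit uniqueness argument the paper relies on when matching $\tilde F$ with $\hat F$. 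The one soft spot --- which you flag yourself --- is the asserted equivariance of the edge-constraint mean curvature under rigid motions, $N \mapsto -N$, and lattice reversal. The claim is true (the curvatures in \cite{hoffmann_discrete_2017} arise from a Steiner-type formula for offset nets, so $H$ is rigid-motion invariant, changes sign under $N \mapsto -N$, and is unchanged under reversal since numerator and denominator flip together), but a complete write-up would have to verify this against the actual definitions there rather than assert it; as it stands this step is plausible bookkeeping left undone, not a flaw in the strategy.
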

\begin{proof}
	Let $F: D \to \mathbb{R}^3$ be the conjugate discrete isothermic minimal net.
	Then by Corollary \ref{coro:linetoplane}, we have that the discrete curvature line $F_{m, n_0}$ and the normal line congruence along the curvature line are contained in the same plane $\mathcal{Q}_1$.
	Therefore, we may invoke Proposition \ref{prop:isothermicReflection} to reflect $F$ across $\mathcal{Q}_1$ so that $F$ and $N$ are now defined on $\mathbb{Z}^2$.
	Now, let $\tilde{F}$ be the conjugate discrete asymptotic minimal net of the extended discrete isothermic minimal net $F$, where $\left.\tilde{F}\right|_D$ agrees with the original $\tilde{F}$.
	We now show that $\tilde{F}$ is symmetric with respect to $\ell$.

	Let $\mathcal{Q}_2$ be the plane such that $N_{m,n_0} \in \mathcal{Q}_2$ for any $m \in \mathbb{Z}$; it follows that $\ell$ is perpendicular to $\mathcal{Q}_2$.
	By construction, $N$ is symmetric with respect to the plane $\mathcal{Q}_2$.
	
	Now, let $T \in \SO$ be a rotation around $\ell$ by 180 degrees, and consider $\hat{F} := T\tilde{F}$.
	By the definition of Gauss maps of discrete asymptotic nets, it must follow that one choice of the Gauss map $\hat{N}$ of $\hat{F}$ be $\hat{N} = -TN$.
	The fact that $\ell$ is perpendicular to $\mathcal{Q}_2$ implies that $\hat{N}_{m,n}$ is symmetric to $N_{m,n}$ with respect to the plane $\mathcal{Q}_2$.
	However, because $N$ is symmetric with respect to $\mathcal{Q}_2$, it follows that $N_{m, n_0 + k} = \hat{N}_{m, n_0 - k}$.
	Since, $\tilde{F}$ and $\hat{F}$ share the same initial condition along $\ell$, we have $\tilde{F}_{m,n_0 + k} = \hat{F}_{m, n_0 - k}$ by Fact \ref{fact:Weierstrass2}. 
\end{proof}

%
%

\section{Examples of discrete minimal nets with symmetry}\label{sect:examples}

Let $F : \mathbb{Z}^2 \to \mathbb{R}^3$ be a discrete isothermic minimal surface with Gauss map $N$, and choose a point $(m_0, n_0) \in \mathbb{Z}^2$. 
Suppose that the discrete curves $F_{m, n_0}$ and $F_{m_0, n}$, and also the normal line congruences along these curves, are contained in the planes $\mathcal{P}_1$ and $\mathcal{P}_2$, respectively.
Since we have that $F$ and $N$ are edge-parallel, $N_{m, n_0}$ and $N_{m_0, n}$ must also be contained in planes $\mathcal{Q}_1$ and $\mathcal{Q}_2$ containing the origin and parallel to $\mathcal{P}_1$ and $\mathcal{P}_2$, respectively (see also Corollary \ref{coro:greatCircle}).
Denoting the quadrilateral $(m_0,n_0), (m_0+1, n_0), (m_0 + 1, n_0 + 1), (m_0, n_0 + 1)$ by $(ijk\ell)$, we have the following lemma.

\begin{lemm}
	The angle between the planes $\mathcal{P}_1$ and $\mathcal{P}_2$ measured on the side containing the quadrilateral $F_{ijk\ell}$, and the angle between the planes $\mathcal{Q}_1$ and $\mathcal{Q}_2$ measured on the side containing the quadrilateral $N_{ijk\ell}$ are supplementary angles.
\end{lemm}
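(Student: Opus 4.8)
The plan is to reduce everything to the Christoffel relationship between $F$ and its Gauss map $N$ and then to track a single sign. First I would record the relevant geometry of the four planes. Since $F$ and $N$ are edge-parallel and the curvature lines $F_{m,n_0}$ and $F_{m_0,n}$ lie in $\mathcal{P}_1$ and $\mathcal{P}_2$, the planes $\mathcal{Q}_1$ and $\mathcal{Q}_2$ carrying $N_{m,n_0}$ and $N_{m_0,n}$ satisfy $\mathcal{Q}_1 \parallel \mathcal{P}_1$ and $\mathcal{Q}_2 \parallel \mathcal{P}_2$, so the two pairs of planes share normal directions and their intersection lines $\mathcal{P}_1 \cap \mathcal{P}_2$ and $\mathcal{Q}_1 \cap \mathcal{Q}_2$ are parallel; I would fix their common direction $\vec{w}$. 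I would also note that $F_i = F_{m_0,n_0} \in \mathcal{P}_1 \cap \mathcal{P}_2$ and $N_i \in \mathcal{Q}_1 \cap \mathcal{Q}_2$, so in each case the two quadrilateral edges emanating from the corner vertex start on the relevant intersection line.

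Next I would give a concrete description of the dihedral angle ``measured on the side of the quadrilateral''. Near the corner $F_i$ the quadrilateral $F_{ijk\ell}$ is spanned by the edge vectors $\dif F_{ij}$ and $\dif F_{il}$, lying in $\mathcal{P}_1$ and $\mathcal{P}_2$ respectively. Projecting these onto $\vec{w}^{\perp}$ gives vectors $\bar{f}_{ij}$ and $\bar{f}_{il}$, and the dihedral angle on the quadrilateral side equals $\angle(\bar{f}_{ij}, \bar{f}_{il})$. The identical description applied to $N_{ijk\ell}$ yields $\angle(\bar{n}_{ij}, \bar{n}_{il})$, where $\bar{n}$ denotes the $\vec{w}^{\perp}$-projection of the corresponding $\dif N$.

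The crux is the sign bookkeeping. Because the Christoffel transform of a discrete minimal net is its own Gauss map, we have $\dif N_{ij} = \tfrac{a_{ij}}{\|\dif F_{ij}\|^2}\dif F_{ij}$ and $\dif N_{il} = \tfrac{a_{il}}{\|\dif F_{il}\|^2}\dif F_{il}$, so each $N$-edge is a real scalar multiple of the corresponding $F$-edge whose sign is that of $a_{ij}$ (resp.\ $a_{il}$). Isothermicity forces the cross ratio $a_{ij}/a_{il}$ to be negative, hence $a_{ij}$ and $a_{il}$ have opposite signs, and projection to $\vec{w}^{\perp}$ preserves these signs. Thus exactly one of $\bar{n}_{ij} \parallel \bar{f}_{ij}$, $\bar{n}_{il} \parallel \bar{f}_{il}$ reverses orientation, giving $\angle(\bar{n}_{ij}, \bar{n}_{il}) = \angle(\bar{f}_{ij}, -\bar{f}_{il}) = \pi - \angle(\bar{f}_{ij}, \bar{f}_{il})$ (the other sign case swaps the roles of the two edges and yields the same conclusion), which is exactly supplementarity.

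I expect the main obstacle to be the geometric justification that the dihedral angle ``measured on the side of the quadrilateral'' genuinely equals the angle between the $\vec{w}^{\perp}$-projections of the two emanating edges, together with verifying that this projection does not destroy the sign information, i.e.\ that neither $\dif F_{ij}$ nor $\dif F_{il}$ is parallel to $\vec{w}$, which holds by non-degeneracy of the quadrilateral. Once that identification is secured, the supplementarity is forced entirely by the single negative cross ratio.
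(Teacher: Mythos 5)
Your proof is correct and takes essentially the same route as the paper: the paper's two-line proof likewise combines edge-parallelism of $F$ and $N$ (so the angles between $\mathcal{P}_1,\mathcal{P}_2$ and $\mathcal{Q}_1,\mathcal{Q}_2$ agree as unsigned plane angles) with the orientation reversal of Christoffel duality, which is exactly what your sign computation via $a_{ij}/a_{il} \in \mathbb{R}_{<0}$ makes explicit. Your projection onto $\vec{w}^{\perp}$ and the non-degeneracy remark just flesh out, in welcome detail, what the paper compresses into the phrase that ``the orientations of $F$ and $N$ are opposite.''
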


\begin{proof}
	Since $F$ and $N$ are parallel meshes, the angle between $\mathcal{P}_1$ and $\mathcal{P}_2$ equals that between $\mathcal{Q}_1$ and $\mathcal{Q}_2$.
	However, the Christoffel duality, or the Weierstrass representation,  tells us that the orientations of $F$ and $N$ are opposite, giving us the desired conclusion. 
\end{proof}

\begin{rema}
	Since stereographic projection is a Möbius transformation, it preserves angles.
	Therefore, to determine the angle between $\mathcal{Q}_1$ and $\mathcal{Q}_2$, one only needs to look at the angle between the circles containing $g_{m,n_0}$ and $g_{m_0, n}$.
\end{rema}

Before looking at the examples, we comment on how to change the Weierstrass data of a given smooth minimal surface so that it is parametrized with isothermic coordinates (see, for example, \cite[Section 2.3]{bobenko_painleve_2000}).
Let a (smooth) minimal surface $X : \Sigma \subset \mathbb{R}^2 \cong \mathbb{C} \to \mathbb{R}^3$ be represented by
	\[
		X(z) = \Re \int (1 - g(z)^2, \sqrt{-1}(1 + g(z)^2), 2g(z)) f(z) \dif z
	\]
over a simply-connected domain $\Sigma$ on which $g$ is meromorphic, while $f$ and $fg^2$ are holomorphic.
Then the coordinate $w$ satisfying
	\begin{equation}\label{eqn:coordinateChange}
		(w_z)^2 = f g_z \quad\text{\resp{$(w_z)^2 = -\sqrt{-1} f g_z$}},
	\end{equation}
for $w_z = \frac{\partial w}{\partial z}$, becomes an isothermic \resp{conformal asymptotic} coordinate of $X$, and $X$ can be represented as
	\begin{gather*}
		X(w) = \Re \int (1 - g(w)^2, \sqrt{-1}(1 + g(w)^2), 2g(w)) \frac{1}{g_w(w)} \dif w\\
		\text{\resp{$X(w) = \Re \int (1 - g(w)^2, \sqrt{-1}(1 + g(w)^2), 2g(w)) \frac{\sqrt{-1}}{g_w(w)} \dif w$}}.
	\end{gather*}
\begin{exam}
	Recall that the well-known Enneper surface and higher order Enneper surfaces can be represented via the Weierstrass data $g(z) = z^k$ and $f(z) = 1$ for $k \in \mathbb{N}$.
	Taking the coordinate change as in \eqref{eqn:coordinateChange} (and applying a suitable homothety on the domain depending on $k$), we obtain new Weierstrass data $g(w) = w^\frac{2k}{k+1}$.
	
	Therefore, from the discrete power function $z^\gamma$ defined in \cite{agafonov_discrete_2000} (see also \cite{ando_explicit_2014, hoffmann_discrete_2012}), let $g$ be the discrete power function with $\gamma = \frac{2k}{k+1}$.
	Then, $g_{m,0} \in \mathbb{R}_{\geq 0}$ while $g_{0,n}$ is on the line $z = r e^{\sqrt{-1} \frac{k \pi }{k+1}}$ for $r \in \mathbb{R}_{\geq 0}$.
	Hence, $F_{m,0}$ and $F_{0,n}$ are on planes meeting at an angle $\frac{\pi}{k+1}$.
	Reflecting the surface iteratively with respect to these planes give us the discrete isothermic analogue of higher order Enneper surfaces, and by considering its conjugate via Fact \ref{fact:Weierstrass2}, we obtain a discrete asymptotic net with line symmetries (see Figure \ref{fig:discEnneper}).

\begin{figure}[t]
	\centering
	\includegraphics[scale=0.6]{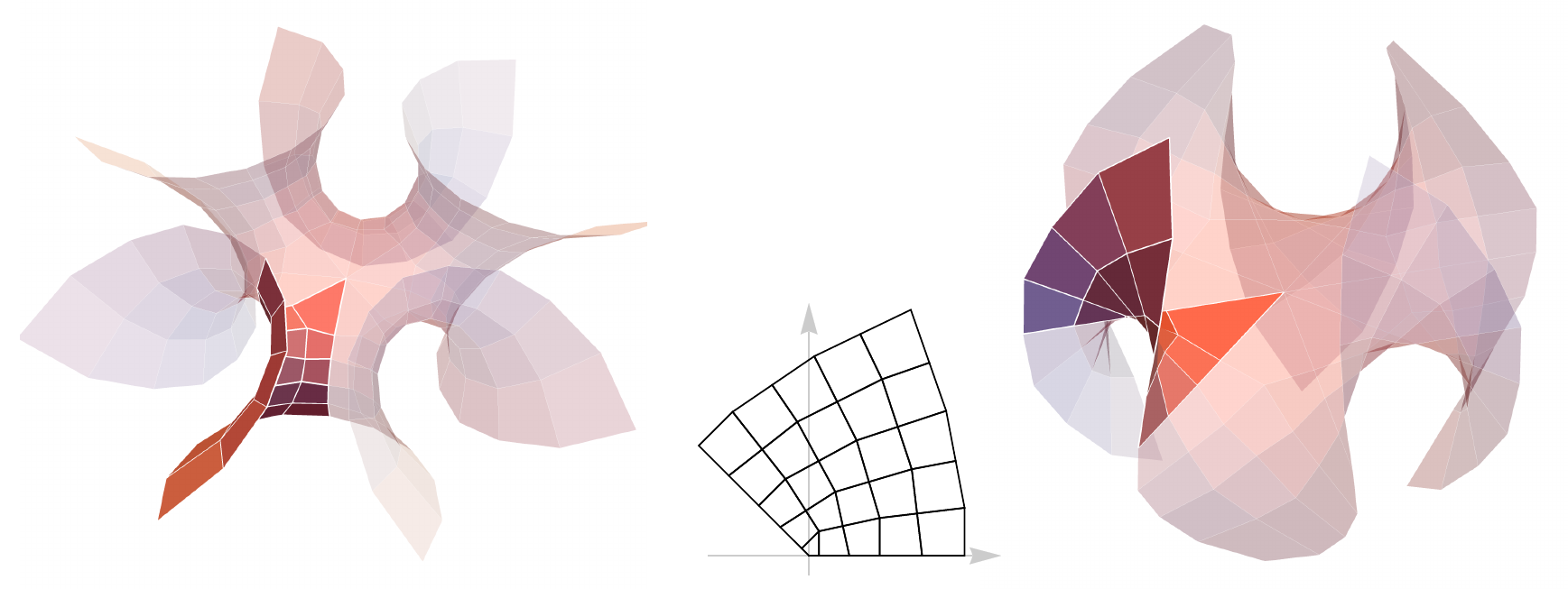}
	\caption{A discrete higher order Enneper surface from a discrete power function and its conjugate. The left-hand side is a discrete isothermic minimal net having planar symmetry; the right-hand side is a discrete asymptotic minimal net having line symmetry. This figure was drawn with $k = 3$. (See also \cite{rossman_discrete_2018}.)}
	\label{fig:discEnneper}
\end{figure}

\end{exam}

\begin{exam}
	Planar Enneper surfaces (see, for example \cite{karcher_construction_1989}) are examples of minimal surfaces with planar ends.
	In particular, the planar Enneper surface with $2$-fold symmetry is given by the Weierstrass data $g(z) = z^3$ and $f(z) = \frac{1}{g_z(z)}$; hence, $z$ is an isothermic coordinate.
	
	The discrete power function $z^3$ following \cite{agafonov_discrete_2000, ando_explicit_2014, hoffmann_discrete_2012} becomes immersed on the domain $D := \{ (m,n) \in \mathbb{Z}^2 : m \geq 0, n \geq 0\} \setminus \{(0,0)\}$, and $g_{m,0} \in \mathbb{R}$ while $g_{0,n}$ is on the line $z = - r \sqrt{-1}$ for $r \in \mathbb{R}_{>0}$.
	Therefore, $F_{m,0}$ and $F_{0,n}$ are on planes meeting at an angle $\frac{\pi}{2}$, and the resulting surface has $2$-fold symmetry, and by considering its conjugate via Fact \ref{fact:Weierstrass2}, we obtain an example of a discrete asymptotic net with line symmetries (see Figure \ref{fig:discz3Rich}).

\begin{figure}[t]
	\centering
	\includegraphics[scale=0.6]{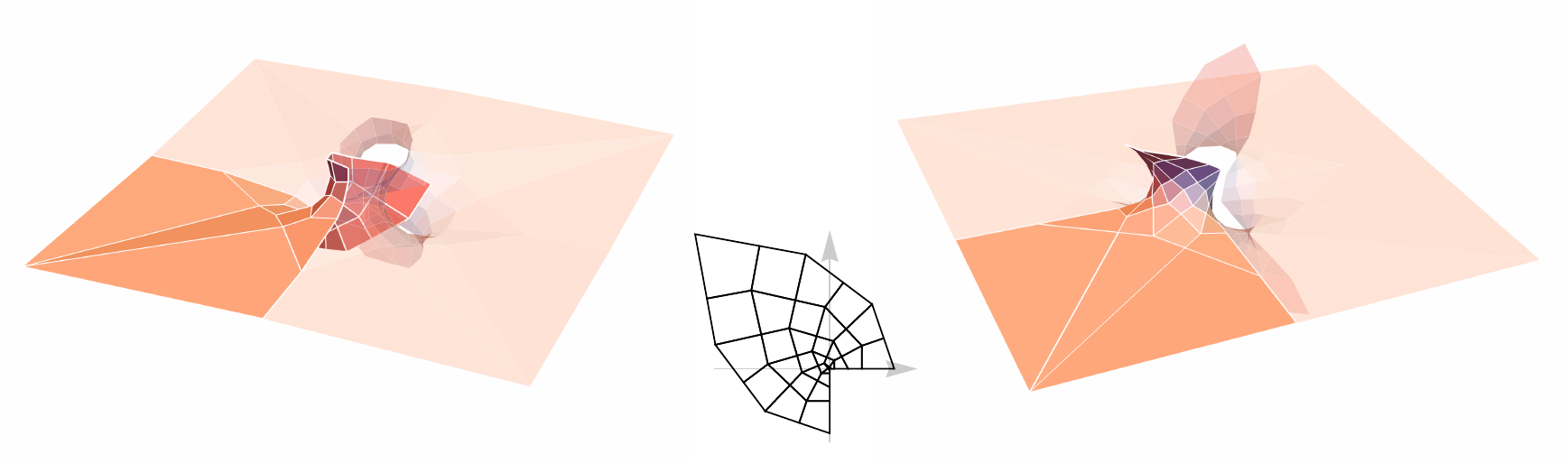}
	\caption{Discrete planar Enneper surface with $2$-fold symmetry from discrete power function $z^3$ and its conjugate. The left-hand side is a discrete isothermic minimal net having planar symmetry; the right-hand side is a discrete asymptotic minimal net having line symmetry.}
	\label{fig:discz3Rich}
\end{figure}

\end{exam}

\begin{exam}
	The minimal $k$-noids (for $k \in \mathbb{N}$, $k \geq 3$) of Jorge-Meeks in \cite{jorge_topology_1983} are minimal surfaces that are topologically equivalent to the sphere minus $k$ disks with $k$ catenoidal ends, given by the Weierstrass data $g(z) = z^{k-1}$ and $f(z) = \frac{1}{(z^k - 1)^2}$.
	Changing coordinates as in \eqref{eqn:coordinateChange} (and applying a suitable homothety on the domain depending on $k$), we obtain new Weierstrass data $g(w) = (\tanh w)^{\frac{2k - 2}{k}}$ with isothermic coordinate $w$.
	Under such settings, a fundamental piece of the minimal $k$-noid can be drawn over the region $w \in [0, \infty] \times \left[0, \tfrac{\pi}{4}\right] \subset \mathbb{R}^2 \cong \mathbb{C}$
	over which $g(w)$ has values
		\[
			g(w) \in D_k := \left\{z = r e^{\sqrt{-1} \theta} : 0 \leq r \leq 1, 0 \leq \theta \leq \tfrac{(k-1)\pi}{k} \right\} \setminus \{ 1 \}.
		\]
	In fact, as also demonstrated in Figure \ref{fig:gfornnoid},
		\[
			g(w) = 
				\begin{cases}
					r(w), & \text{if $w \in [0, \infty] \times \{0\}$ }\\
					r(w)e^{\sqrt{-1} \frac{(k-1) \pi}{k}}, & \text{if $w \in \{0\} \times \left[0, \frac{\pi}{4}\right]$}\\
					e^{\sqrt{-1} \theta(w)}, & \text{if $w \in [0, \infty] \times\{\frac{\pi}{4}\}$}.
				\end{cases}
		\]
	\begin{figure}
		\centering
	\begin{minipage}{0.32\textwidth}
		\includegraphics[width=0.9\textwidth]{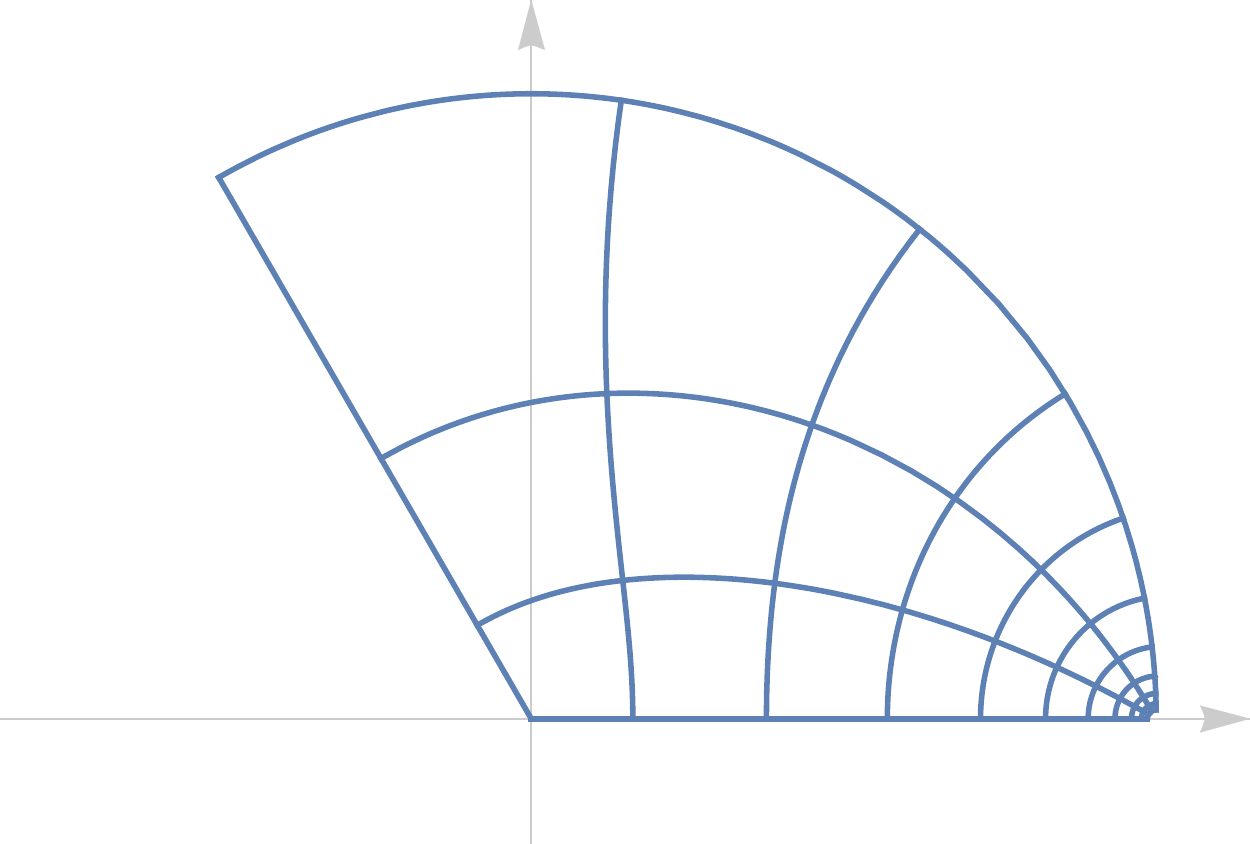}
	\end{minipage}
	\begin{minipage}{0.32\textwidth}
		\includegraphics[width=0.9\textwidth]{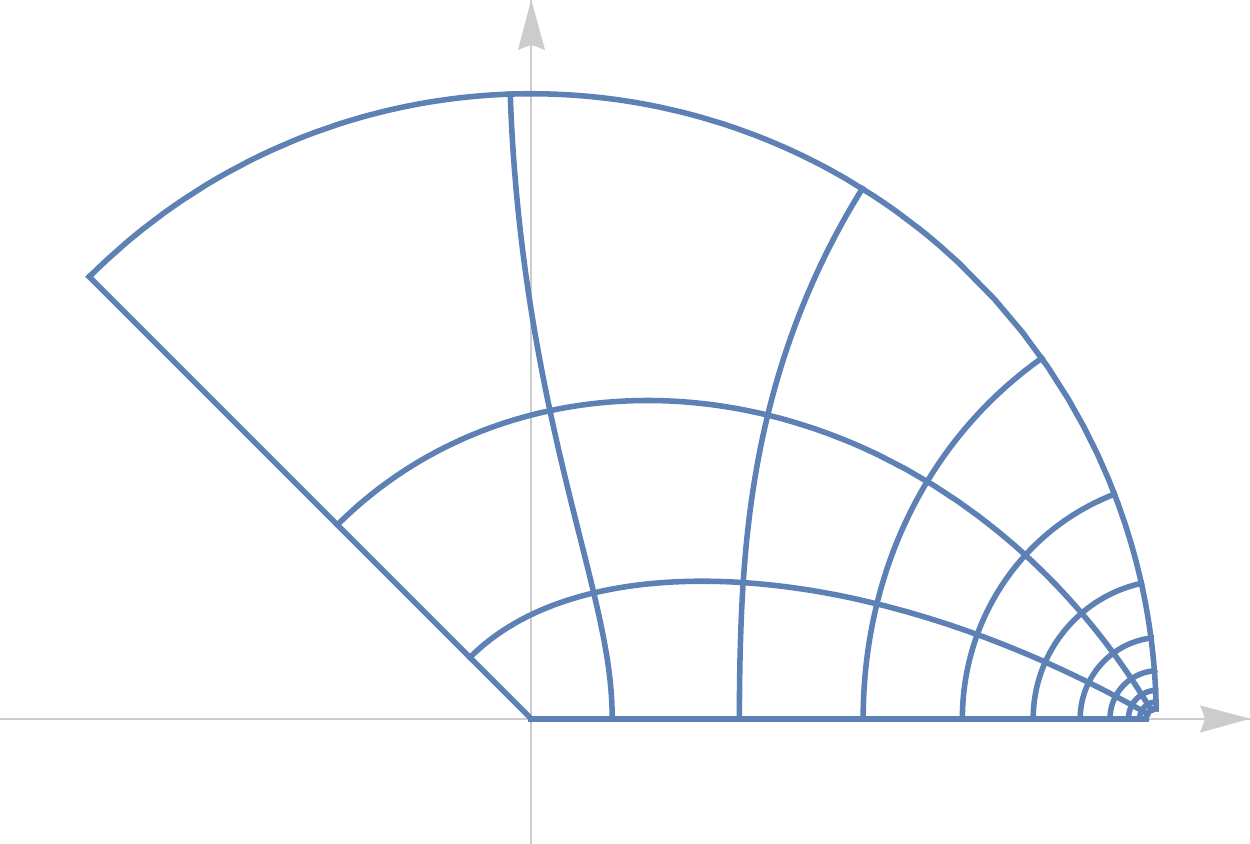}
	\end{minipage}
	\begin{minipage}{0.32\textwidth}
		\includegraphics[width=0.9\textwidth]{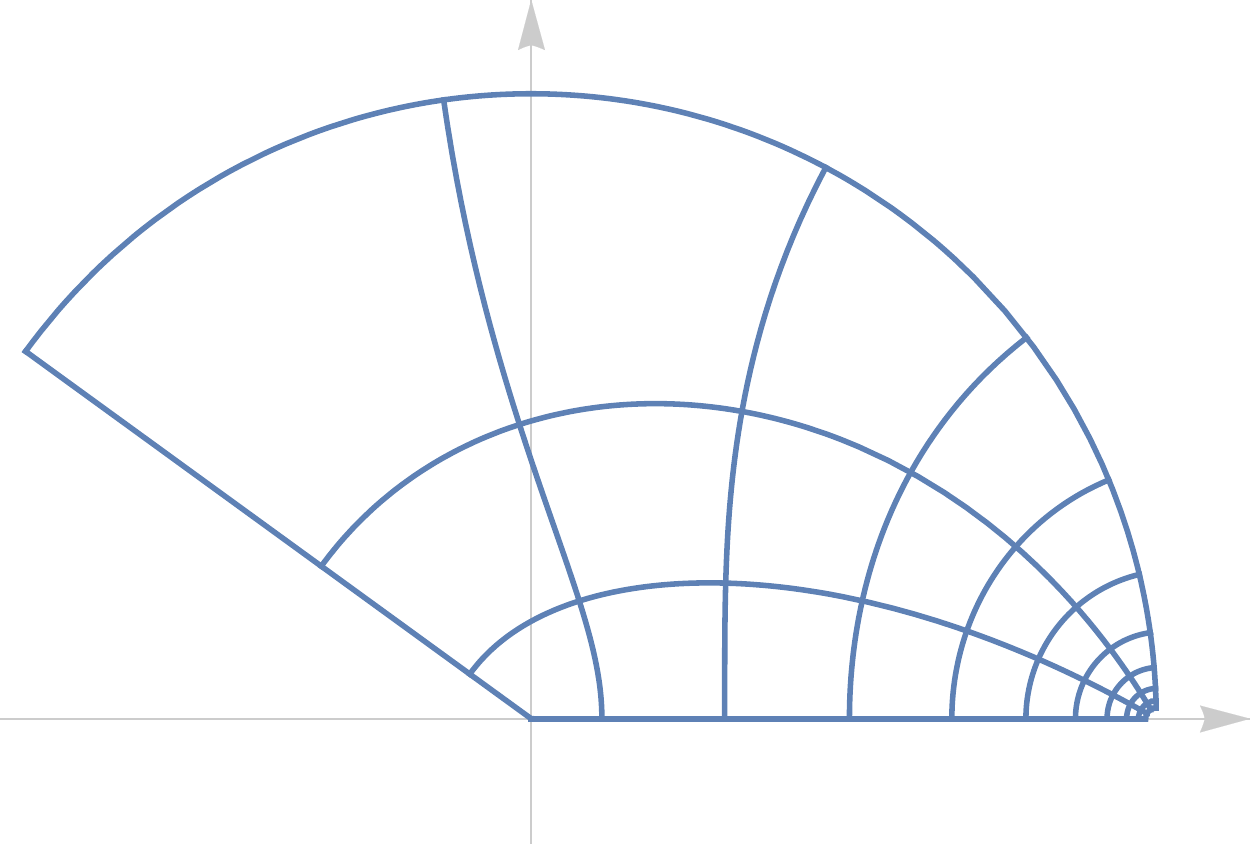}
	\end{minipage}
	\caption{Images of smooth $g(w)$ giving fundamental pieces of the minimal $k$-noids, drawn for $k = 3,4,5$.}
	\label{fig:gfornnoid}
\end{figure}

	To discretize $g$ (numerically) over the domain $\{ (m,n) \in \mathbb{Z}^2 : m \geq 0, 0 \leq n \leq n_{\text{max}}\}$, we require that
	\begin{itemize}
		\item $g_{0,0} = 0$ and $g_{0,n_{\text{max}}} = e^{\sqrt{-1}\frac{(k-1)\pi}{k}}$,
		\item $g_{m,0} \in [0,1)$ is a strictly increasing sequence,
		\item $g_{0,n} = r_n e^{\sqrt{-1}\frac{(k-1)\pi}{k}}$ where $r_n \in [0,1]$ is a strictly increasing finite sequence,
		\item $g_{m,n_{\text{max}}} = e^{\sqrt{-1}\theta_m}$ where $\theta_m \in \left(0,\tfrac{(k-1)\pi}{k}\right]$ is a strictly decreasing sequence,
		\item the cross ratio of $g$ over any elementary quadrilateral is equal to $-1$, and
		\item $g_{m,n} \in D_k$ for all $(m,n)$ in the domain.
	\end{itemize}
	By the definition of $g$, we know that
	\begin{itemize}
		\item the planes containing $F_{m,0}$ and $F_{0,n}$ meet at an angle $\frac{\pi}{k}$, and
		\item the planes containing $F_{n,0}$ and $F_{m,n_{\text{max}}}$ meet at an angle $\frac{\pi}{2}$,
	\end{itemize}
	giving us a discrete analogue of minimal $k$-noids of Jorge-Meeks (see Figures \ref{fig:disc3noid} and \ref{fig:disc45noid}).
	
\begin{figure}[t]
	\centering
	\includegraphics[scale=0.45]{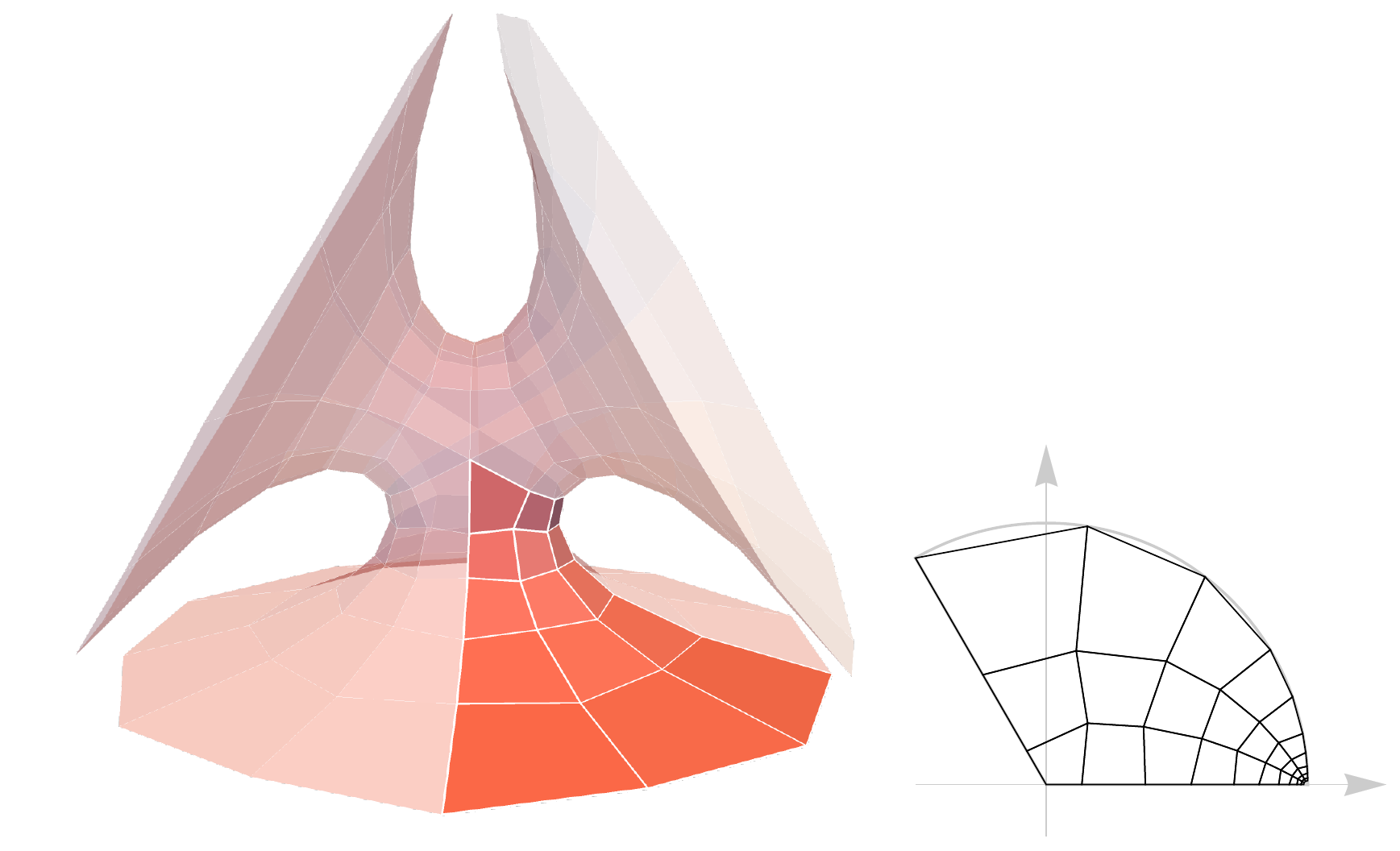}
	\caption{Numerical solution of discrete trinoid ($k = 3$) given with its discrete holomorphic function satisfying the boundary conditions (with $n_{\text{max}} = 3$).}
	\label{fig:disc3noid}
\end{figure}	
	
\begin{figure}[t]
	\centering
	\begin{minipage}{0.48\textwidth}
		\includegraphics[width=\textwidth]{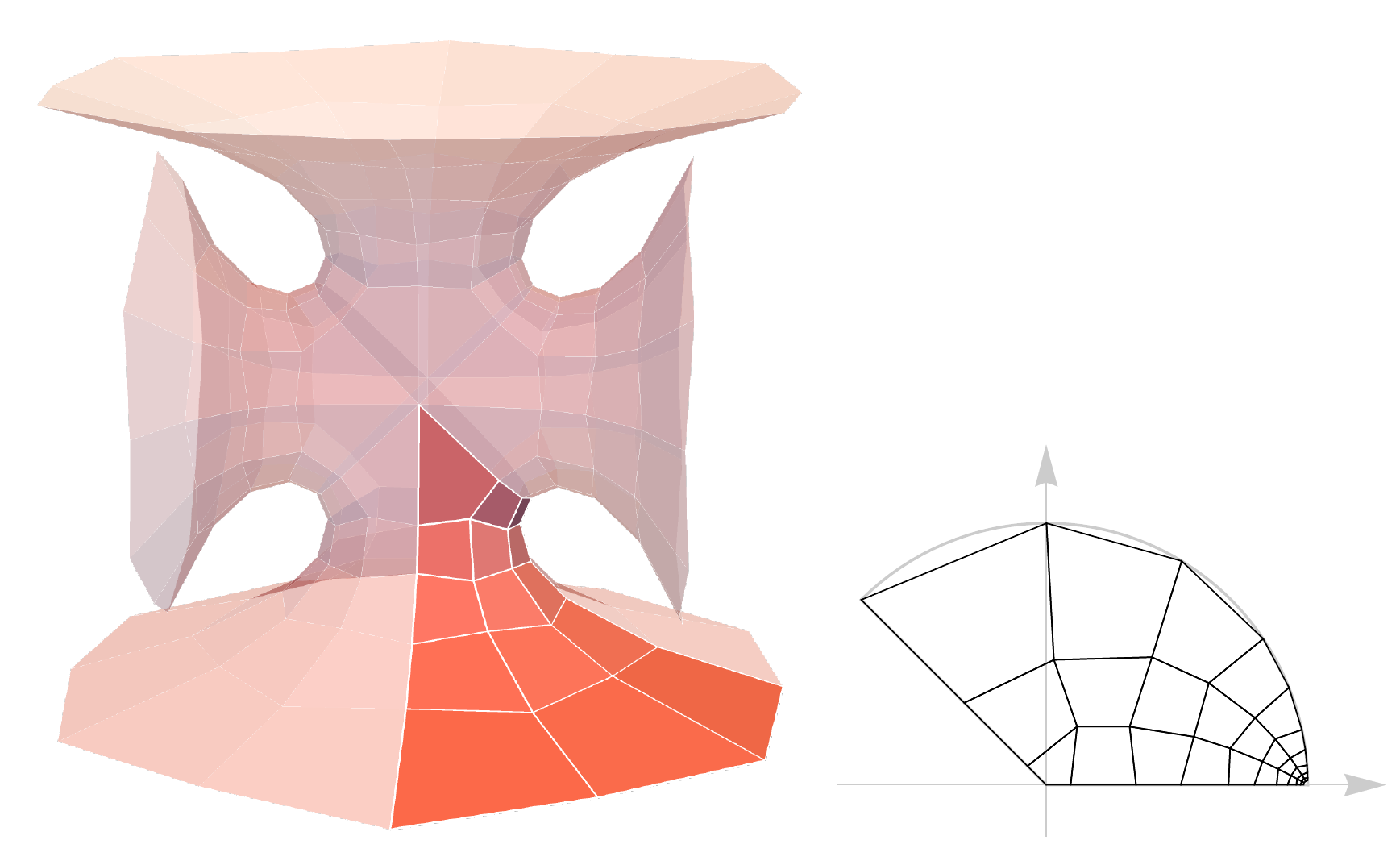}
	\end{minipage}
	\begin{minipage}{0.48\textwidth}
		\includegraphics[width=\textwidth]{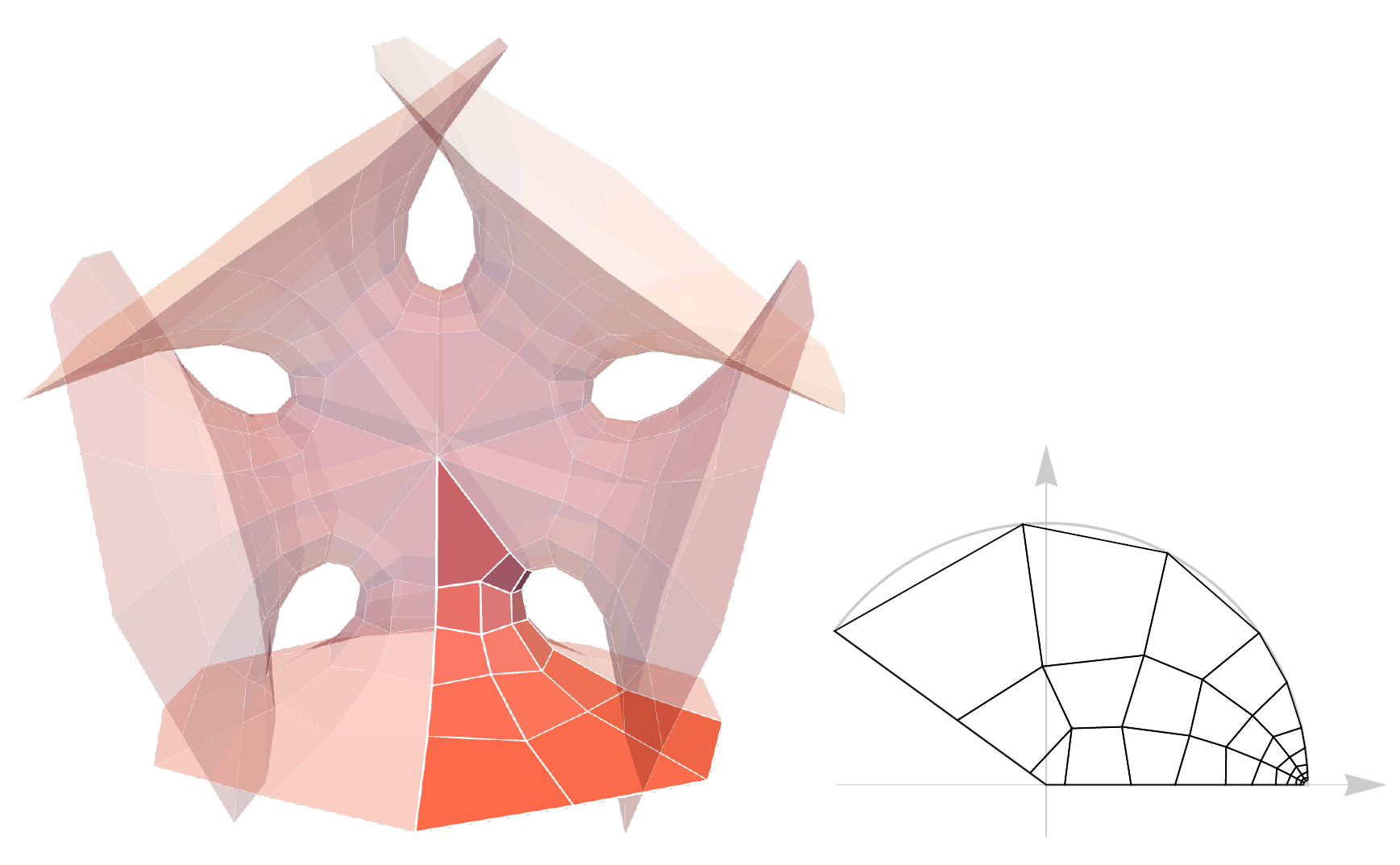}
	\end{minipage}
	\caption{Numerical solutions of discrete $4$-noid and $5$-noid given with their discrete holomorphic functions satisfying the boundary conditions (with $n_{\text{max}} = 3$).}
	\label{fig:disc45noid}
\end{figure}		
\end{exam}

\begin{exam}
	By expanding on the idea of using the symmetry of $k$-noids as boundary conditions for the holomorphic data, we can create other discrete minimal nets with symmetries.
	In this example, we create discrete minimal nets with symmetry groups of the Platonic solids \cite{xu_symmetric_1995}.
	As in the $k$-noids examples, we can ascertain the boundary conditions from the symmetries of the discrete minimal net by calculating the angles at which the great circles meet (see, for example, \cite{berglund_minimal_1995}).
	Then, by finding discrete holomorphic functions satisfying the given boundary conditions, we can obtain discrete minimal nets with symmetry groups of the Platonic solids.
	Here, we show two numerical examples of discrete minimal nets with such symmetries in Figure \ref{fig:solids}.
	\begin{figure}[t]
		\centering
		\begin{minipage}{0.4\textwidth}
			\includegraphics[width=\textwidth]{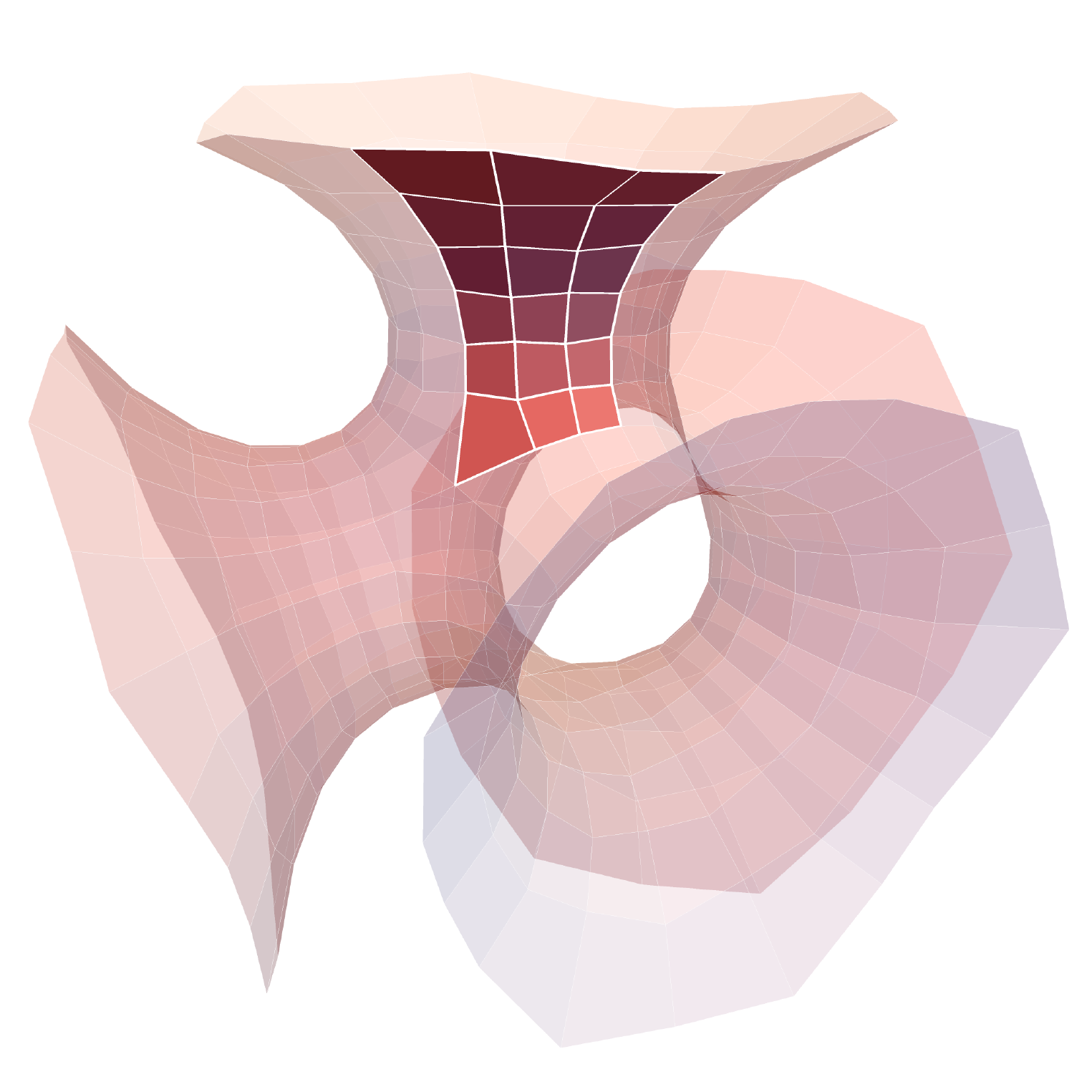}
		\end{minipage}
		\begin{minipage}{0.08\textwidth}
			\quad
		\end{minipage}
		\begin{minipage}{0.4\textwidth}
			\includegraphics[width=\textwidth]{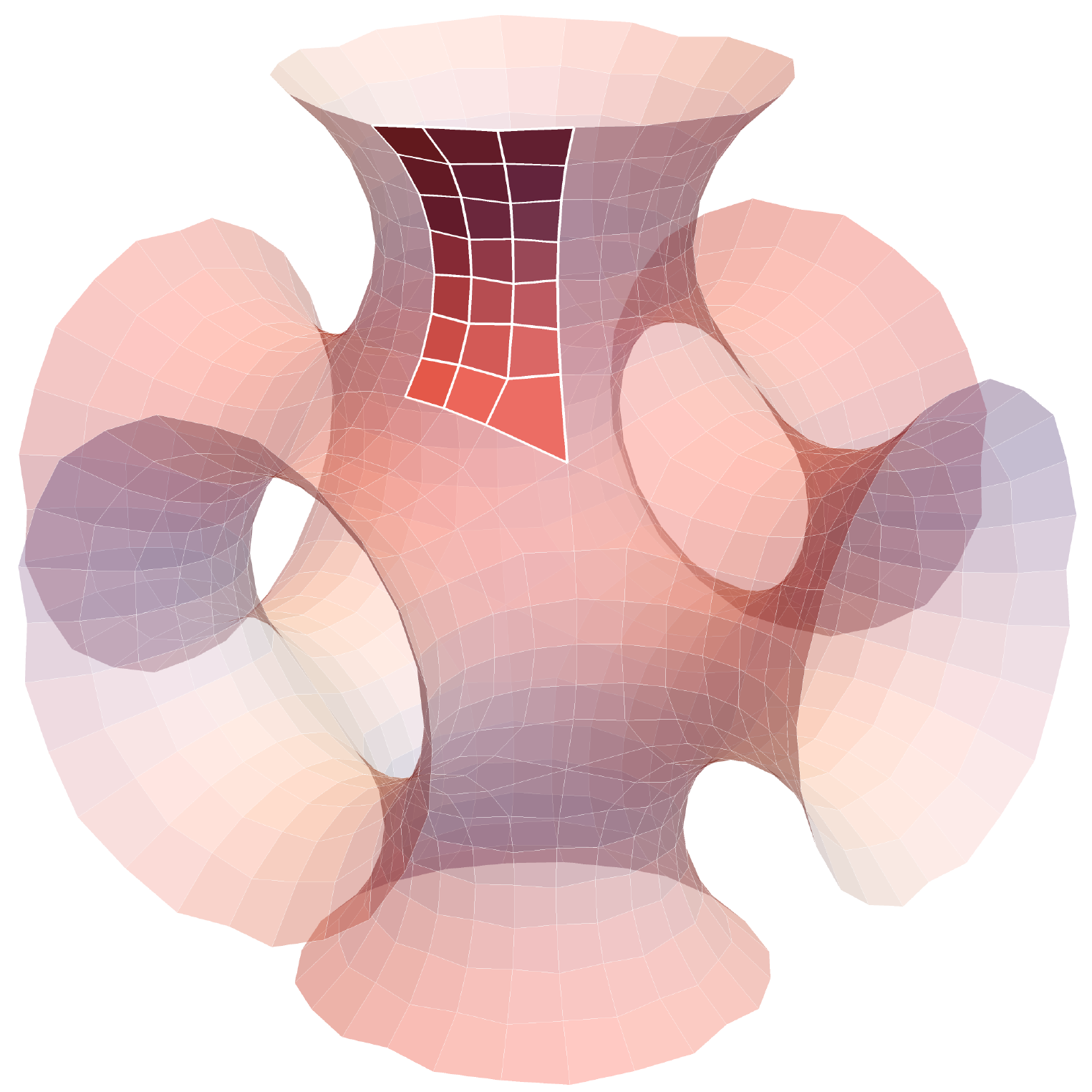}
		\end{minipage}
		\caption{Numerical solutions of discrete minimal nets with tetrahedral symmetry on the left, and octahedral symmetry on the right.}
		\label{fig:solids}
	\end{figure}	
\end{exam}

\begin{rema}
	One may notice that while most of the vertices on the examples have degree 4, i.e.\ $4$ edges meet at the vertex, there are vertices with degree higher than $4$.
	While this may indicate the existence of a branch point on the Gauss map, we have avoided this issue by assigning these vertices to be one of the ``corner'' points of the fundamental piece, and treating the Gauss map as coming from a holomorphic function on a simply-connected domain in the complex plane.
	In fact, on these vertices, the definition of discrete minimality as in \cite[Definition 7]{bobenko_discrete_1996-1} might not be directly applicable; however, the definition via Steiner's formula (as in Definition \ref{def:curvatures} and Definition \ref{def:minimality}) allows us to consider mean curvatures on the faces around such points, and determine minimality at these points as well.
\end{rema}

\vspace{15pt}
\textbf{Acknowledgements.}
The authors would like to express their gratitude to Professor Masashi Yasumoto for fruitful discussions, and the referee for valuable comments. The first author was partially supported by JSPS/FWF Bilateral Joint Project I3809-N32 ``Geometric shape generation'' and Grant-in-Aid for JSPS Fellows No.\ 19J10679; the second author was partially supported by two JSPS grants, Grant-in-Aid for Scientific Research (C) 15K04845 and (S) 17H06127 (P.I.: M.-H.\ Saito); the third author was partially supported by NRF 2017 R1E1A1A 03070929.
\nocite{*}

\begin{bibdiv}
\begin{biblist}

\bib{agafonov_discrete_2000}{article}{
      author={Agafonov, Sergey~I.},
      author={Bobenko, Alexander~I.},
       title={Discrete {$Z^\gamma$} and {Painlevé} equations},
        date={2000},
     journal={Internat. Math. Res. Notices},
      volume={2000},
      number={4},
       pages={165–193},
       doi={10.1155/S1073792800000118},
}

\bib{ando_explicit_2014}{article}{
      author={Ando, Hisashi},
      author={Hay, Mike},
      author={Kajiwara, Kenji},
      author={Masuda, Tetsu},
       title={An explicit formula for the discrete power function associated
  with circle patterns of {{Schramm}} type},
        date={2014},
     journal={Funkcial. Ekvac.},
      volume={57},
      number={1},
       pages={1\ndash 41},
       doi={10.1619/fesi.57.1},
}

\bib{berglund_minimal_1995}{article}{
      author={Berglund, Jorgen},
      author={Rossman, Wayne},
       title={Minimal surfaces with catenoid ends},
        date={1995},
     journal={Pacific J. Math.},
      volume={171},
      number={2},
       pages={353\ndash 371},
       doi={10.2140/pjm.1995.171.353},
}

\bib{bobenko_discrete_2017}{incollection}{
      author={Bobenko, Alexander~I.},
      author={B\"ucking, Ulrike},
      author={Sechelmann, Stefan},
       title={Discrete minimal surfaces of {{Koebe}} type},
        date={2017},
   booktitle={Modern approaches to discrete curvature},
      editor={Najman, Laurent},
      editor={Romon, Pascal},
      series={Lecture Notes in Math.},
      volume={2184},
   publisher={{Springer}},
     address={Cham},
       pages={259\ndash 291},
       doi={10.1007/978-3-319-58002-9_8},
}

\bib{bobenko_painleve_2000}{book}{
      author={Bobenko, Alexander~I.},
      author={Eitner, Ulrich},
       title={Painlev\'e equations in the differential geometry of surfaces},
      series={Lecture Notes in Math.},
   publisher={{Springer-Verlag}},
     address={Berlin},
        date={2000},
      volume={1753},
        doi={10.1007/b76883},
}

\bib{bobenko_discrete_1996-1}{article}{
      author={Bobenko, Alexander~I.},
      author={Pinkall, Ulrich},
       title={Discrete isothermic surfaces},
        date={1996},
     journal={J. Reine Angew. Math.},
      volume={475},
       pages={187\ndash 208},
       doi={10.1515/crll.1996.475.187},
}

\bib{bobenko_curvature_2010}{article}{
      author={Bobenko, Alexander~I.},
      author={Pottmann, Helmut},
      author={Wallner, Johannes},
       title={A curvature theory for discrete surfaces based on mesh
  parallelity},
        date={2010},
        ISSN={0025-5831},
     journal={Math. Ann.},
      volume={348},
      number={1},
       pages={1\ndash 24},
       doi={10.1007/s00208-009-0467-9},
}

\bib{bobenko_discrete_2008}{book}{
      author={Bobenko, Alexander~I.},
      author={Suris, Yuri~B.},
       title={Discrete differential geometry},
      series={Graduate Studies in Mathematics},
   publisher={{American Mathematical Society}},
     address={Providence, RI},
        date={2008},
      number={98},
        ISBN={978-0-8218-4700-8},
}

\bib{bucking_approximation_2007}{thesis}{
      author={B\"ucking, Ulrike},
       title={Approximation of conformal mappings by circle patterns and
  discrete minimal surfaces},
        type={Ph.{{D}}. {{Thesis}}},
        date={2007},
}

\bib{bucking_minimal_2008}{incollection}{
      author={B\"ucking, Ulrike},
       title={Minimal surfaces from circle patterns: Boundary value problems,
  examples},
        date={2008},
   booktitle={Discrete differential geometry},
      editor={Bobenko, Alexander~I.},
      editor={Schr\"oder, Peter},
      editor={Sullivan, John~M.},
      editor={Ziegler, G\"unter~M.},
      series={Oberwolfach Semin.},
      volume={38},
   publisher={{Birkh\"auser}},
     address={Basel},
       pages={37\ndash 56},
       doi={10.1007/978-3-7643-8621-4_2},
}

\bib{burstall_discrete_2018}{article}{
      author={Burstall, Francis~E.},
      author={{Hertrich-Jeromin}, Udo},
      author={Rossman, Wayne},
       title={Discrete linear {{Weingarten}} surfaces},
        date={2018},
        ISSN={0027-7630},
     journal={Nagoya Math. J.},
      volume={231},
       pages={55\ndash 88},
       doi={10.1017/nmj.2017.11},
}

\bib{costa_example_1984}{article}{
      author={Costa, Celso~J.},
       title={Example of a complete minimal immersion in $\bf {R}^3$ of genus
  one and three embedded ends},
        date={1984},
        ISSN={0100-3569},
     journal={Bol. Soc. Brasil. Mat.},
      volume={15},
      number={1-2},
       pages={47–54},
       doi={10.1007/BF02584707},
}

\bib{doliwa_transformations_2000}{article}{
      author={Doliwa, Adam},
      author={Santini, Paolo~Maria},
      author={Ma\~nas, Manuel},
       title={Transformations of quadrilateral lattices},
        date={2000},
        ISSN={0022-2488},
     journal={J. Math. Phys.},
      volume={41},
      number={2},
       pages={944\ndash 990},
       doi={10.1063/1.533175},
}

\bib{ejiri_remark_2015}{article}{
      author={Ejiri, Norio},
      author={Fujimori, Shoichi},
      author={Shoda, Toshihiro},
       title={A remark on limits of triply periodic minimal surfaces of genus
  3},
        date={2015},
        ISSN={0166-8641},
     journal={Topology Appl.},
      volume={196},
      number={part B},
       pages={880\ndash 903},
       doi={10.1016/j.topol.2015.05.014},
}

\bib{hoffman_complete_1985}{article}{
      author={Hoffman, David~A.},
      author={Meeks, William~H., III},
       title={A complete embedded minimal surface in $\bf {R}^3$ with genus one
  and three ends},
        date={1985},
        ISSN={0022-040X},
     journal={J. Differential Geom.},
      volume={21},
      number={1},
       pages={109–127},
       doi={10.4310/jdg/1214439467},
}

\bib{hoffman_embedded_1990}{article}{
      author={Hoffman, David~A.},
      author={Meeks, William~H., III},
       title={Embedded minimal surfaces of finite topology},
        date={1990},
        ISSN={0003-486X},
     journal={Ann. of Math. (2)},
      volume={131},
      number={1},
       pages={1\ndash 34},
       doi={10.2307/1971506},
}

\bib{hoffmann_discrete_2000}{thesis}{
      author={Hoffmann, Tim},
       title={Discrete curves and surfaces},
        type={Ph.{{D}}. {{Thesis}}},
        date={2000},
}

\bib{hoffmann_discrete_2012}{article}{
      author={Hoffmann, Tim},
      author={Rossman, Wayne},
      author={Sasaki, Takeshi},
      author={Yoshida, Masaaki},
       title={Discrete flat surfaces and linear {{Weingarten}} surfaces in
  hyperbolic 3-space},
        date={2012},
        ISSN={0002-9947},
     journal={Trans. Amer. Math. Soc.},
      volume={364},
      number={11},
       pages={5605\ndash 5644},
       doi={10.1090/S0002-9947-2012-05698-4},
}

\bib{hoffmann_discrete_2017}{article}{
      author={Hoffmann, Tim},
      author={Sageman-Furnas, Andrew~O.},
      author={Wardetzky, Max},
       title={A discrete parametrized surface theory in $\mathbb{R}^3$},
        date={2017},
        ISSN={1073-7928},
     journal={Int. Math. Res. Not. IMRN},
      volume={2017},
      number={14},
       pages={4217–4258},
       doi={10.1093/imrn/rnw015},
}

\bib{jorge_topology_1983}{article}{
      author={Jorge, Luqu\'esio~P.},
      author={Meeks, William~H., III},
       title={The topology of complete minimal surfaces of finite total
  {{Gaussian}} curvature},
        date={1983},
        ISSN={0040-9383},
     journal={Topology},
      volume={22},
      number={2},
       pages={203\ndash 221},
       doi={10.1016/0040-9383(83)90032-0},
}

\bib{karcher_embedded_1988}{article}{
      author={Karcher, Hermann},
       title={Embedded minimal surfaces derived from {{Scherk}}'s examples},
        date={1988},
        ISSN={0025-2611},
     journal={Manuscripta Math.},
      volume={62},
      number={1},
       pages={83\ndash 114},
       doi={10.1007/BF01258269},
}

\bib{karcher_construction_1989}{article}{
      author={Karcher, Hermann},
       title={Construction of minimal surfaces},
        date={1989},
     journal={Preprint No. 12, SFB 256, Universit\"at Bonn},
}

\bib{karcher_triply_1989}{article}{
      author={Karcher, Hermann},
       title={The triply periodic minimal surfaces of {{Alan Schoen}} and their
  constant mean curvature companions},
        date={1989},
        ISSN={0025-2611},
     journal={Manuscripta Math.},
      volume={64},
      number={3},
       pages={291\ndash 357},
       doi={10.1007/BF01165824},
}

\bib{karcher_construction_1996}{article}{
      author={Karcher, Hermann},
      author={Polthier, Konrad},
       title={Construction of triply periodic minimal surfaces},
        date={1996},
        ISSN={0962-8428},
     journal={Philos. Trans. Roy. Soc. London Ser. A},
      volume={354},
      number={1715},
       pages={2077\ndash 2104},
       doi={10.1098/rsta.1996.0093},
}

\bib{koch_$3$-periodic_1988}{article}{
      author={Koch, Elke},
      author={Fischer, Werner},
       title={On $3$-periodic minimal surfaces with noncubic symmetry},
        date={1988},
        ISSN={0044-2968},
     journal={Z. Krist.},
      volume={183},
      number={1-4},
       pages={129–152},
       doi={10.1524/zkri.1988.183.14.129},
}

\bib{konopelchenko_three-dimensional_1998}{article}{
      author={Konopelchenko, B.~G.},
      author={Schief, W.~K.},
       title={Three-dimensional integrable lattices in {{Euclidean}} spaces:
  conjugacy and orthogonality},
        date={1998},
        ISSN={1364-5021},
     journal={R. Soc. Lond. Proc. Ser. A Math. Phys. Eng. Sci.},
      volume={454},
      number={1980},
       pages={3075\ndash 3104},
       doi={10.1098/rspa.1998.0292},
}

\bib{nutbourne_differential_1988}{book}{
      author={Nutbourne, Anthony~W.},
      author={Martin, Ralph~R.},
       title={Differential geometry applied to curve and surface design.
  {{Vol}}. 1},
   publisher={{Ellis Horwood Ltd.}},
     address={Chichester},
        date={1988},
        ISBN={978-0-7458-0140-7},
}

\bib{pottmann_geometry_2007}{article}{
      author={Pottmann, Helmut},
      author={Liu, Yang},
      author={Wallner, Johannes},
      author={Bobenko, Alexander~I.},
      author={Wang, Wenping},
       title={Geometry of multi-layer freeform structures for architecture},
        date={2007},
     journal={ACM Trans. on Graph. (TOG)},
      volume={26},
      number={3},
       pages={65\ndash 1\ndash 65\ndash 11},
       doi={10.1145/1276377.1276458},
}

\bib{rossman_minimal_1995}{article}{
      author={Rossman, Wayne},
       title={Minimal surfaces in $\bf {R}^3$ with dihedral symmetry},
        date={1995},
        ISSN={0040-8735},
     journal={Tohoku Math. J. (2)},
      volume={47},
      number={1},
       pages={31–54},
       doi={10.2748/tmj/1178225634},
}

\bib{rossman_irreducible_1997}{article}{
      author={Rossman, Wayne},
      author={Umehara, Masaaki},
      author={Yamada, Kotaro},
       title={Irreducible constant mean curvature $1$ surfaces in hyperbolic
  space with positive genus},
        date={1997},
        ISSN={0040-8735},
     journal={Tohoku Math. J. (2)},
      volume={49},
      number={4},
       pages={449–484},
       doi={10.2748/tmj/1178225055},
}

\bib{rossman_discrete_2018}{incollection}{
      author={Rossman, Wayne}, 
      author={Yasumoto, Masashi},
       title={Discrete linear {{Weingarten}} surfaces with singularities in
  {{Riemannian}} and {{Lorentzian}} spaceforms},
        date={2018},
   booktitle={Singularities in generic geometry},
      editor={Izumiya, Shyuichi},
      editor={Ishikawa, Goo},
      editor={Yamamoto, Minoru},
      editor={Saji, Kentaro},
      editor={Yamamoto, Takahiro},
      editor={Takahashi, Masatomo},
      series={Adv. Stud. Pure Math.},
      volume={78},
   publisher={{Math. Soc. Japan}},
     address={Tokyo},
       pages={383\ndash 410},
       doi={10.2969/aspm/07810383},
}

\bib{sauer_parallelogrammgitter_1950}{article}{
      author={Sauer, Robert},
       title={Parallelogrammgitter als {{Modelle}} pseudosph\"arischer
  {{Fl\"achen}}},
        date={1950},
        ISSN={0025-5874},
     journal={Math. Z.},
      volume={52},
       pages={611\ndash 622},
       doi={10.1007/BF02230715},
}

\bib{sauer_differenzengeometrie_1970}{book}{
      author={Sauer, Robert},
       title={Differenzengeometrie},
   publisher={{Springer-Verlag}},
     address={Berlin-New York},
        date={1970},
}

\bib{sauer_uber_1931}{article}{
      author={Sauer, Robert},
      author={Graf, H.},
       title={{\"Uber} {{Fl\"achenverbiegung}} in {{Analogie}} zur
  {{Verknickung}} offener {{Facettenflache}}},
        date={1931},
        ISSN={0025-5831},
     journal={Math. Ann.},
      volume={105},
      number={1},
       pages={499\ndash 535},
       doi={10.1007/BF01455828},
}

\bib{schief_unification_2003}{article}{
      author={Schief, W.~K.},
       title={On the unification of classical and novel integrable surfaces.
  {{II}}. {{Difference}} geometry},
        date={2003},
        ISSN={1364-5021},
     journal={R. Soc. Lond. Proc. Ser. A Math. Phys. Eng. Sci.},
      volume={459},
      number={2030},
       pages={373\ndash 391},
       doi={10.1098/rspa.2002.1008},
}

\bib{schoen_infinite_1970}{techreport}{
      author={Schoen, A.~H.},
       title={Infinite periodic minimal surfaces without self-intersections},
        type={Technical {{Report}}},
 institution={{NASA Electronics Research Center}},
     address={Cambridge, MA},
        date={1970},
      number={NASA-TN-D-5541, C-98},
}

\bib{wunderlich_zur_1951}{article}{
      author={Wunderlich, Walter},
       title={Zur {{Differenzengeometrie}} der {{Fl\"achen}} konstanter
  negativer {{Kr\"ummung}}},
        date={1951},
     journal={\"Osterreich. Akad. Wiss. Math.-Nat. Kl. S.-B. IIa.},
      volume={160},
       pages={39\ndash 77},
}

\bib{xu_symmetric_1995}{article}{
      author={Xu, Youyu},
       title={Symmetric minimal surfaces in $\bf {R}^3$},
        date={1995},
        ISSN={0030-8730},
     journal={Pacific J. Math.},
      volume={171},
      number={1},
       pages={275–296},
       doi={10.2140/pjm.1995.171.275},
}

\end{biblist}
\end{bibdiv}

\end{document}